\newtheorem{lemma}{Lemma}
\newtheorem{remark}{Remark}
\newtheorem{proposition}{Proposition}
\newtheorem{theorem}{Theorem}
\newtheorem{corollary}{Corollary}
\newcommand{\EE}{{\mathbb{E}}}
\newcommand{\dd}{\,{\rm d}}
\newcommand{\PP}{\mathbb{P}}
\newcommand{\Z}{\mathbb {Z}}
\newcommand{\N}{\mathbb {N}}
\newcommand{\R}{\mathbb {R}}
\newcommand{\cI}{\mathcal {I}}
\newcommand{\cO}{\mathcal {O}}
\newcommand{\cL}{\mathcal {L}}
\newcommand{\cP}{\mathcal {P}}
\newcommand{\p}{\mathfrak{q}}
\newcommand{\bX}{{\bf X}}
\newcommand{\dtv}{d_{\textsc{tv}}}
\newcommand{\q}{\overline{{\mathfrak q}}}
\newcommand{\tmix}{t_{\textsc{mix}}}
\newcommand{\bI}{{\bf{I}}}
\newcommand{\bJ}{{\bf{J}}}
\title{Cutoff for the mean-field zero-range process with bounded monotone rates}
\author{Jonathan Hermon, Justin Salez}
\begin{document}
\maketitle
\begin{abstract}
We consider the zero-range process with arbitrary bounded monotone rates on the complete graph, in the regime where the number of sites diverges while the density of particles per site converges. We determine the asymptotics of the mixing time from any initial configuration, and establish the cutoff phenomenon.  The intuitive picture is that the system separates into a slowly evolving solid phase and a quickly relaxing liquid phase: as time passes, the solid phase   dissolves  into the liquid phase, and the mixing time is essentially the time at which the system becomes completely liquid. Our proof uses the path coupling technique of Bubley and Dyer, and the analysis of a suitable hydrodynamic limit. To the best of our knowledge, even the order of magnitude of the mixing time was unknown, except in the special case of constant rates.
\end{abstract}
\tableofcontents
\newpage
\section{Introduction}
\subsection{Model}
Introduced by Spitzer in 1970 \cite{Spitzer}, the \emph{zero-range process} is a widely studied model of interacting random walks, see, e.g. \cite{liggettbook2,liggettbook1,Evans} and the references therein. It describes the  evolution of $m\ge 1$ indistinguishable particles randomly hopping across $n\ge 1$ sites. The interaction is specified  by a function $r\colon \{1,2,\ldots\}\to(0,\infty)$, where $r(k)$ indicates the rate at which particles are expelled from a site when $k$ particles are present on it. We will here focus on the \emph{mean-field} version of the model, where all jump destinations are uniformly distributed. More formally, we consider a continuous-time Markov chain ${\bf X}:=(X(t)\colon t\ge 0)$ taking values in the state space 
\begin{eqnarray}
\Omega & := & \left\{x=(x_1,\ldots,x_n)\in\Z_+^n\colon \sum_{i=1}^nx_i=m\right\},
\end{eqnarray}
and whose Markov generator $\cL$ acts on observables $\varphi\colon \Omega\to\R$ as follows:
\begin{eqnarray}
\label{def:markov}
(\cL \varphi)(x) & = & \frac{1}{n}\sum_{1\le i,j\le n}r(x_i)\left(\varphi(x+\delta_j-\delta_i)-\varphi(x)\right).
\end{eqnarray}
Here $(\delta_i)_{1\le i\le n}$ denotes the canonical basis of $\Z^n_+$, and we adopt the convention that $r(0)=0$ (no jumps from empty sites). The generator $\cL$ is clearly irreducible, with reversible law
\begin{eqnarray}
\label{statio}
\pi(x) & \propto & \prod_{i=1}^n\prod_{k=1}^{x_i}\frac{1}{r(k)}.
\end{eqnarray}
The present paper is concerned with the problem of estimating the speed at which the  convergence to equilibrium occurs, as quantified by the so-called \emph{mixing times}:
\begin{eqnarray}
\label{def:tmix}\tmix(x;\varepsilon) & := & \min\left\{t\ge 0\colon \left\|\PP_x\left(X(t)\in \cdot\right)-\pi\right\|_{\textsc{tv}}\le \varepsilon\right\}.
\end{eqnarray}
In this definition, $\|\mu-\nu\|_\textsc{tv}=\max_{A\subseteq \Omega}\left|\mu(A)-\nu(A)\right|$ denotes the total-variation distance, and the parameters $x\in\Omega$ and $\varepsilon\in(0,1)$ specify the initial state  and the desired precision, respectively. Of particular interest is the \emph{worst-case} mixing time, obtained by maximizing over all initial states:
\begin{eqnarray}
\label{def:tmixworst}\tmix(\varepsilon) & := & \max\left\{\tmix(x;\varepsilon)\colon {x\in\Omega}\right\}.
\end{eqnarray}
Understanding this fundamental parameter -- and in particular, its dependency in the precision $\varepsilon\in(0,1)$ -- is in general a challenging task, see the books \cite{MR2341319,MR3726904} for a comprehensive account.  Our current knowledge on the total-variation mixing time of the zero-range process is embarrassingly limited in comparison with the numerous functional-analytic estimates that have been established over the past decades \cite{MR1415232,MR1681098,MR2073330,MR2184099,MR2200172,MorrisZRP,Cap,Graham,MR3513606,HS}. In fact, to the best of our knowledge, the exact order of magnitude of the mixing time of the zero-range process has only been determined in the very special case where the rate function $r$ is constant \cite{Lacoin,Lacoincycle,MS,HS}.

\subsection{Main result}The rate function $r$ will remain fixed throughout the paper, and will only be assumed to be non-decreasing and bounded. Upon re-scaling time by a constant factor if necessary, we take
\begin{eqnarray}
\label{assume:rates}
\lim_{k\to\infty}\uparrow r(k) & = & 1.
\end{eqnarray}
Our results will be expressed in terms of a certain generating series associated with $r$, namely
\begin{eqnarray}
\label{def:psi}
\Psi(z):=\frac{zR'(z)}{R(z)}, & \textrm{ where } & R(z) := \sum_{k=0}^\infty\frac{z^k}{r(1)\cdots r(k)}.
\end{eqnarray}
All asymptotic statements will refer to the regime where the density of particles per site stabilizes:
\begin{eqnarray}
\label{assume:sparse}
n\to\infty, & \qquad & \frac{m}{n}\to \rho\in[0,\infty).
\end{eqnarray}
To lighten the notation, we will keep the dependency upon $n$ implicit as often as possible. Our main result is the following explicit asymptotics for the worst-case mixing time:
\begin{theorem}[Worst-case mixing time]\label{th:main}For any fixed $\varepsilon\in(0,1)$, 
\begin{eqnarray}
\label{eq:main}
\frac{\tmix(\varepsilon)}{n} & \xrightarrow[n\to\infty]{} & \gamma:=\int_0^\rho\frac{{\rm d}s}{1-\Psi^{-1}(s)},
\end{eqnarray}
where $\Psi^{-1}\colon[0,\infty)\to[0,1)$ denotes the inverse of the (increasing) bijection $\Psi\colon[0,1)\to[0,\infty)$. 
\end{theorem}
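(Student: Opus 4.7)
The overall strategy is to match the asymptotic $\gamma n$ with both an upper bound and a lower bound, driven by the picture of a ``solid'' mass that slowly dissolves into a quickly equilibrating ``liquid'' phase. The first step is to reduce the worst-case statement to the canonical fully-solid starting configuration $x_\star := (m,0,\ldots,0)$. Exploiting the monotonicity of $r$ together with the permutation invariance of the mean-field generator $\cL$, one sets up an attractive coupling showing that $\tmix(x;\cdot) \le \tmix(x_\star;\cdot) + o(n)$ for every $x\in\Omega$, so it suffices to prove $\tmix(x_\star;\varepsilon) = (\gamma+o(1))n$.

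The next step is to derive, and justify, the hydrodynamic ODE that governs the dissolution. Write $M(t)$ for the maximum occupancy at time $t$. Started from $x_\star$, on the time scale $t=\tau n$ the $n-1$ lightly loaded sites carry mass $m-M(t)$ of order $n$, and the plan is to show, using the known spectral gap / log-Sobolev estimates for the ZRP, that on any intermediate scale $1\ll s\ll n$ these sites relax to the local grand-canonical equilibrium at fugacity $z(\tau) := \Psi^{-1}\!\bigl(\rho - M(\tau n)/n\bigr)$. Granting this, the heavy site receives particles at mean rate $z(\tau)$ (the stationary per-site throughput in the liquid phase) and emits them at rate $r(M(t))\to 1$ by \eqref{assume:rates}, so that $m(\tau):=M(\tau n)/n$ obeys
\begin{equation*}
\dot m(\tau) \;=\; -\bigl(1-\Psi^{-1}(\rho-m(\tau))\bigr), \qquad m(0)=\rho,
\end{equation*}
whose unique solution reaches zero exactly at $\tau=\gamma$. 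For the upper bound one then observes that once $M\ll n$ the configuration looks essentially liquid, and a path coupling of Bubley--Dyer type contracts the coupling distance to $o(1)$ within $o(n)$ additional units of time, yielding $\tmix(x_\star;\varepsilon)\le(\gamma+o(1))n$.

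For the lower bound, the natural distinguishing statistic is the maximum occupancy. Under $\pi$ the site-occupancies are asymptotically i.i.d.\ with geometric-type tails, so $\max_i X_i=O(\log n)$ with high probability. Under $\PP_{x_\star}$, the hydrodynamic step already gives $M((\gamma-\varepsilon)n)/n \to m(\gamma-\varepsilon)>0$, which I would upgrade to a concentration statement $M((\gamma-\varepsilon)n)\ge c_\varepsilon n$ w.h.p. The event $\{\max_i X_i\ge c_\varepsilon n\}$ thus separates the two laws and forces $\tmix(x_\star;\varepsilon)\ge(\gamma-\varepsilon)n$.

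The main obstacle lies in the first part of the upper bound: the liquid phase evolves under an \emph{inhomogeneous} dynamics with a slowly changing boundary source supplied by the heavy site, and one must prove that it nevertheless tracks the local grand-canonical equilibrium at fugacity $\Psi^{-1}(\rho-M/n)$ \emph{uniformly in $\tau\in[0,\gamma]$}, so that the ODE for $m(\tau)$ remains accurate along the full trajectory up to (and beyond) its extinction time. This is the delicate point where the quantitative functional inequalities for the ZRP must be combined with the path coupling; by contrast, the monotone reduction to $x_\star$, the resolution of the ODE, and the distinguishing-statistic lower bound are either classical or routine.
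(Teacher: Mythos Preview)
Your intuitive picture (solid/liquid phases, the ODE $\dot m = -(1-\Psi^{-1}(\rho-m))$, the distinguishing-statistic lower bound) is exactly right and matches the paper. But two of your building blocks are genuine gaps.

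\textbf{Reduction to $x_\star$.} You claim an ``attractive coupling'' gives $\tmix(x;\cdot)\le\tmix(x_\star;\cdot)+o(n)$. The monotone coupling the paper uses (based on the graphical construction and the monotony of $r$) only compares configurations that are ordered \emph{coordinate-wise}; an arbitrary $x\in\Omega$ and $x_\star=(m,0,\ldots,0)$ are not so ordered. What you seem to need is a majorization-type comparison, and that is not available for free. The paper in fact explicitly warns that ``there does not appear to be any direct justification'' that $x_\star$ is worst, and circumvents this by determining $\tmix(x;\varepsilon)/n$ for \emph{every} initial profile (Corollary~\ref{co:pro}), and then maximizing the explicit formula $\sum_k\frac1k\int_{\rho_k}^{\rho_{k-1}}\frac{{\rm d}s}{1-\Psi^{-1}(s)}$ analytically to recover $\gamma$.

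\textbf{Relaxation of the liquid phase.} More seriously, your plan for the hydrodynamic step hinges on ``known spectral gap / log-Sobolev estimates for the ZRP'' to show that the lightly loaded sites track the grand-canonical equilibrium at fugacity $\Psi^{-1}(\rho-M/n)$. For general bounded monotone rates on the complete graph, no such estimate sharp enough to yield $o(n)$ relaxation is available; the paper stresses that Morris's spectral gap bound is specific to the rate-one case and that even the order $\tmix=\Theta(n)$ was previously unknown. The paper's resolution is to reverse your logical order: it proves \emph{first}, by path coupling with tagged particles (no functional inequalities at all), the quantitative bound $\tmix(x;\varepsilon)\le\kappa\|x\|_\infty+(\ln n)^\kappa$ (Theorem~\ref{th:liquid}). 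This fast-mixing result for liquid configurations is then the input that drives the separation-of-timescales argument and yields the hydrodynamic limit (Theorem~\ref{th:solid}). So path coupling is not just the final contraction step after dissolution---it is the engine that makes the ODE derivation rigorous in the first place.

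Your lower bound via $\max_i X_i$ is fine and is essentially what the paper does (the concentration you need is a direct by-product of the hydrodynamic limit in Theorem~\ref{th:solid}).
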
 
Although it seems intuitively clear that the worst-case mixing time should be achieved by initially placing all particles on the same site, there does not appear to be any direct  justification of this fact. We will thus determine the asymptotics of the mixing time from \emph{every} possible configuration $x\in\Omega$, see Corollary \ref{co:pro} below for the detailed result. The notable disappearance of $\varepsilon$ on the right-hand side of (\ref{eq:main}) reveals a sharp transition in the convergence to equilibrium of the process, known as a \emph{cutoff} \cite{MR770418,MR1374011}. To the best of our knowledge, the occurence of this phenomenon for the mean-field zero-range process was only known in the  special \emph{rate-one} case, where the function $r$ is simply constant equal to $1$ \cite{MS}. This choice trivially fits our setting (\ref{assume:rates}), with
\begin{equation}
R(z)=\frac{1}{1-z},\qquad \Psi(z)=\frac{z}{1-z}, \qquad \Psi^{-1}(s)=\frac{s}{1+s},\qquad \gamma=\rho+\frac{\rho^2}{2}.
\end{equation}
Beyond the obvious complications raised by the non-explicit nature of the rates, Theorem \ref{th:main} requires new ideas for at least two reasons. First, the crucial spectral gap estimate of Morris  \cite{MorrisZRP}, on which the whole argument of \cite{MS} ultimately relies, is only available in the rate-one case.  Second, the stationary distribution (\ref{statio}) is no longer uniform, making the entropy computations from \cite{Graham,MS} unapplicable. As a result, even the order of magnitude $\tmix(\varepsilon)=\Theta(n)$ appears to be new. We circumvent these obstacles by resorting to the powerful \emph{path coupling} method of Bubley and Dyer \cite{Pathcoupling}. This alternative route turns out to be so efficient that the proof of our generalization ends up being significantly shorter than that of the original result \cite{MS}, without using anything from it. 

\subsection{Proof outline}

Intuitively, the system may be viewed as consisting of two regions evolving on different time-scales:
\begin{itemize}
\item a slow \emph{solid phase}, consisting of those sites which are occupied by $\Theta(n)$ particles
\item a quick \emph{liquid phase}, formed by those sites which are occupied by $o(n)$ particles.
\end{itemize}
The presence of a solid phase is a clear indication that the system is out of equilibrium, since under the stationary law $\pi$, the maximum occupancy is easily seen to be $\Theta(\log n)$ (see, e.g., (\ref{XXX}) below). What is less obvious, but true, is that conversely, any completely liquid system reaches equilibrium in neglible time. To make this rigorous, we use the path coupling method of Bubley and Dyer \cite{Pathcoupling}. Note that in the regime  (\ref{assume:sparse}), there is $\overline{\rho}<\infty$, independent of $n$, such that
\begin{eqnarray}
\label{assume:bounded}
\frac{m}{n} & \le & \overline{\rho}.
\end{eqnarray}
By a \emph{dimension-free constant}, we will always mean a number which depends only on $\overline{\rho}$ and $r$.

\begin{theorem}[Fast mixing]\label{th:liquid} There is a dimension-free constant $\kappa<\infty$ such that
\begin{eqnarray}
\label{assume:liquid}
\tmix(x;\varepsilon)  & \le & \kappa \|x\|_\infty+(\ln n)^{\kappa},
\end{eqnarray}
for every $x\in\Omega$ and every $\varepsilon\in(0,1)$, provided $n\ge \kappa/\varepsilon$.
\end{theorem}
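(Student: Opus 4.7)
The plan is to split the mixing into a \emph{dissolution} phase of length $O(\|x\|_\infty)$ which drives the chain into the \emph{liquid region} $\mathcal{L}:=\{y\in\Omega : \|y\|_\infty \le L\}$ (with $L = C\log n$), followed by a \emph{liquid mixing} phase of length $(\ln n)^{\kappa}$. By the Markov property and the triangle inequality for total variation distance, concatenating these two phases yields the desired bound.

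For the dissolution, I would analyse a suitable super-harmonic function quantifying the total ``overload'' of the configuration, such as $V(x) := \sum_k (x_k - L)_+$ (or a convex refinement to smooth out the boundary contribution of sites with $x_k=L$). A direct generator computation gives
\[
\cL V(x) \;=\; \frac{A(x)}{n}\sum_i r(x_i) \;-\; \sum_{i:\, x_i\ge L+1} r(x_i),
\]
with $A(x):=|\{k:\,x_k\ge L\}|$. Two ingredients then combine to produce a strictly negative drift whenever $V>0$: the \emph{average} emission rate $\frac{1}{n}\sum_i r(x_i)$ stays bounded away from $1$ uniformly (because $r(0)=0$ together with the density constraint $\sum x_k \le \overline{\rho} n$ forces, via Markov, a positive fraction of sites to have $r(x_k) \le r(\lceil 2 \overline{\rho}\rceil) < 1$), while the emission rate $r(L+1)$ from each overloaded site is close to $1$ (since $L\to\infty$ and $r(k)\uparrow 1$). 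A Doob/Azuma concentration argument then yields that $V(X(t))$ hits $0$ within time $O(\|x\|_\infty)$ with probability at least $1-1/n$.

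For the liquid mixing, I would apply the path coupling method of Bubley and Dyer on the adjacency graph whose edges are single-particle moves $y = x + \delta_j - \delta_i$. Given such adjacent $x,y$, I would couple the two ZRP copies by: (i) synchronising all emissions from sites $k\notin\{i,j\}$, (ii) synchronising the ``common'' rates $r(y_i)=r(x_i-1)$ at $i$ and $r(x_j)$ at $j$, and (iii) \emph{maximally coupling the ``excess'' rates} $\alpha := r(x_i)-r(y_i)\ge 0$ and $\beta := r(y_j)-r(x_j)\ge 0$, so that whenever both chains fire an excess emission (at joint rate $\min(\alpha,\beta)$) to a common uniform destination $\ell$, one has $x'=y'$ by direct inspection (the two excess moves exactly cancel the initial discrepancy $\delta_j - \delta_i$). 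The residual imbalance $|\alpha-\beta|$ fires independently, generating a ``distance-$2$'' transition with probability $(n-2)/n$ and a fortuitous coalescence with probability $1/n$. Under the liquid bound $\|x\|_\infty\vee\|y\|_\infty \le L$, the increments of $r$ are controlled, and with a suitably weighted metric (e.g.\ weights reflecting the local slopes of $r$) one obtains a strict contraction of the expected distance at a rate $\Omega(1/\mathrm{poly}(\log n))$, delivering the claimed $(\ln n)^\kappa$ mixing time from any liquid state by the standard Bubley--Dyer argument.

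The main obstacle is the liquid-mixing step. In the plain $\ell^1$ metric the path coupling drift is nonnegative and typically \emph{positive}, because the excess rates $\alpha$ and $\beta$ are generically mismatched when $r$ has differing increments at $x_i$ and $x_j$, so the ``distance-$2$'' channel outweighs the $O(1/n)$ coalescence rate. The simultaneous-excess coupling neutralises the bulk of this imbalance, but turning the remaining $O(|\alpha-\beta|)$ residual into a strict contraction requires a carefully tuned weighted metric and exploits the uniform occupancy bound produced by the dissolution phase. Calibrating this weighting against the fluctuations of $r$ on $\{0,1,\dots,L\}$, while keeping the diameter polylogarithmic, is the delicate technical core of the proof.
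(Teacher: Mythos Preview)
Your coupling in step (iii) is essentially the paper's \emph{tagged-particle} coupling: writing the two adjacent chains as a common background $X$ (started from $x\wedge y$, with $m-1$ particles) plus a single extra particle located at $I(t)$ or $J(t)$, the ``excess'' emissions you isolate are exactly the jumps of these tagged particles, at rates $\Delta(X_{I})$ and $\Delta(X_{J})$ with $\Delta(k):=r(k+1)-r(k)$. However, your analysis of this coupling contains an error that leads you to misidentify the obstacle. Under your coupling the path distance \emph{never increases}: a residual excess firing from $i$ to $\ell$ sends the discrepancy $\delta_i-\delta_j$ to $\delta_\ell-\delta_j$, which is still a single move. There is no ``distance-$2$'' channel, the $\ell^1$ drift is nonpositive rather than nonnegative, and the weighted-metric machinery you propose is aimed at a non-problem.

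The real obstruction is the coalescence \emph{rate}: it equals $\min(\alpha,\beta)+|\alpha-\beta|/n$ with $\alpha=\Delta(X_I)$, $\beta=\Delta(X_J)$, and since $\Delta(k)\to 0$ (indeed $\Delta(k)=0$ for all $k\ge 1$ in the rate-one case) this rate can vanish whenever either tagged particle has co-occupants. No static reweighting of the path metric cures this; one must control the background occupancies $X_{I(t)}(t)$ and $X_{J(t)}(t)$ at the \emph{moving} discrepancy locations. The paper does precisely that: a uniform single-site exponential drift (Proposition~\ref{pr:uniform}) gives $X_{I(t)}(t)\vee X_{J(t)}(t)=O(\log(1+\|x\|_\infty))$ after time $O(\|x\|_\infty)$ (Lemma~\ref{lm:cooc}), and then an explicit event empties both tagged sites and forces a joint jump (Lemma~\ref{lm:coal}); iterating yields Proposition~\ref{pr:coupling}, from which Theorem~\ref{th:liquid} follows by the standard Bubley--Dyer summation over a shortest path. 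A smaller issue in your dissolution step: the claim $r(\lceil 2\overline{\rho}\rceil)<1$ is false in general (rate-one again), so $\zeta$ is not bounded away from $1$ uniformly over $\Omega$; the paper obtains this only for $t\ge 1$, after a burn-in that produces a positive fraction of empty sites (Lemma~\ref{lm:uniform}).
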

When combined with the worst-case bound $\|x\|_\infty\le m$, this already yields the correct order of magnitude $\tmix(\varepsilon)  = \cO(n)$, for any fixed $\varepsilon\in(0,1)$. However, the real interest of Theorem \ref{th:liquid} lies in the linear dependency in $\|x\|_\infty$, which implies that the equilibrium is attained in negligible time when the initial configuration $x=x(n)$ is completely liquid: for any fixed $\varepsilon\in(0,1)$, we have
\begin{eqnarray}
\label{co:liquid}
\|x\|_\infty=o(n) & \Longrightarrow & \tmix(x;\varepsilon) = o(n).
\end{eqnarray}
By the Markov property, this reduces our task to that of understanding the time it takes for an arbitrary initial condition $x\in\Omega$ to become completely liquid.  By symmetry, we may assume without loss of generality that the coordinates of $x$ are  arranged in decreasing order:
\begin{eqnarray}
\label{assume:order}
x_1 \ \ge & \ldots & \ge \ x_n.
\end{eqnarray}
In the regime (\ref{assume:sparse}), we may further assume (upon passing to a subsequence) that for each $k\ge 1$, 
\begin{eqnarray}
\label{assume:profile}
\frac{x_k}{n} & \xrightarrow[n\to\infty]{} & u_k.
\end{eqnarray}
Note that the limiting profile $(u_k)_{k\ge 1}$ must then  satisfy 
$
u_1\ge u_2\ge\ldots\ge 0$  and $\sum_{k=1}^\infty u_k \le \rho$.
In this setting, our second step will consist in establishing a deterministic approximation of the form 
\begin{eqnarray}
\frac{X_k(nt)}{n} & \approx & \left[u_k-f(t)\right]_+,
\end{eqnarray}
for fixed $t\ge 0$ and $k\ge 1$, where  $f\colon\R_+\to\R_+$ is a smooth increasing function describing the dissolution of the solid phase, and where we have used the notation  $[a]_+=\max(a,0)$.
\begin{theorem}[Hydrodynamic limit]
\label{th:solid}
If $x=x(n)$ satisfies (\ref{assume:order})-(\ref{assume:profile}), then for  any fixed $T\ge 0$,
\begin{eqnarray}
\EE_x\left[\sup_{k\in[n],t\in[0,T]}\left|\frac{X_{k}(nt)}{n}-\left[u_k-f(t)\right]_+\right|\right] & \xrightarrow[n\to\infty]{} &  0,
\end{eqnarray}
 where the function $f\colon\R_+\to\R_+$ is characterized by the differential equation
\begin{eqnarray}
\label{def:hydro}
f'(t) & = & 1-\Psi^{-1}\left(\rho-\sum_{k=1}^\infty \left[u_k-f(t)\right]_+\right),\qquad f(0)=0.
\end{eqnarray}  
Moreover, for each $i\ge 1$, we have the explicit expression
\begin{eqnarray}
f^{-1}\left(u_i\right) & = & \sum_{k=i}^\infty\frac{1}{k}\int_{\rho_k}^{\rho_{k-1}}\frac{{\rm d}s}{1-\Psi^{-1}(s)},
 \end{eqnarray} 
 where the numbers $\rho=\rho_0\ge\rho_1\ge\ldots\ge 0$ are given by $\rho_k := \rho+ku_{k+1}-\sum_{i=1}^ku_i.$
\end{theorem}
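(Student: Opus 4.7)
My plan is a two-timescale analysis: on the slow macroscopic timescale $t\mapsto nt$ the solid phase dissolves according to the ODE \eqref{def:hydro}, while on the fast microscopic timescale the liquid phase tracks its local equilibrium at the current particle density, a fact supplied by Theorem \ref{th:liquid}.

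I would first label the sites so that $x_1\ge\ldots\ge x_n$ initially and track the occupancy $Y_i(t)$ of site $i$. A direct computation from \eqref{def:markov} gives $\cL Y_i = \bar r - r(Y_i)$ where $\bar r:=n^{-1}\sum_j r(Y_j)$ is the empirical mean rate, so Dynkin's formula yields the martingale decomposition
\begin{equation}
Y_i(t) \;=\; Y_i(0) + \int_0^t\bigl(\bar r(Y(s)) - r(Y_i(s))\bigr)\dd s + M_i(t),
\end{equation}
where the martingale $M_i$ has jump rate $\cO(1)$, so $\sup_{t\le nT}|M_i(t)|=o(n)$ with high probability. The content of the analysis is thus to identify the drift.

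The key identity is that the grand-canonical single-site measure $\mu_z(k)\propto z^k/\prod_{j=1}^k r(j)$ has mean $\Psi(z)$ and satisfies $\EE_{\mu_z}[r]=z$ by a telescoping computation. Consequently, if the liquid part of the configuration is near its equilibrium at its current density, then $\bar r \approx \Psi^{-1}(\rho_{\mathrm{liq}}(t))=1-f'(t)$ where $\rho_{\mathrm{liq}}(t):=\rho-\sum_k[u_k-f(t)]_+$. For a solid site $i$ (with $u_i>0$), we also have $r(Y_i(s))\to 1$ since $Y_i(s)\to\infty$, so the drift equals $-f'(s/n)+o(1)$, leading via a Gronwall argument to $Y_i(nt)/n\to u_i-f(t)$ on the interval $[0,f^{-1}(u_i)]$. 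For $t\ge f^{-1}(u_i)$, and for labels $i$ with $u_i=0$ to begin with, an application of Theorem \ref{th:liquid} (to the projected dynamics on the liquid phase, conditioned on the current solid configuration) drives the occupancy to $\cO(\log n)=o(n)$. Uniformity in $k$ follows because only finitely many $u_k$ exceed any positive threshold (as $\sum_k u_k\le\rho$) and the liquid maximum is $\cO(\log n)$ at equilibrium; uniformity in $t$ is standard.

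The main obstacle is rigorously justifying the quasi-equilibrium statement $\bar r\approx\Psi^{-1}(\rho_{\mathrm{liq}})$ while the solid sources are themselves feeding the liquid phase. I would partition $[0,nT]$ into blocks of length much larger than the liquid mixing time $\cO(\log^\kappa n)$ supplied by Theorem \ref{th:liquid} but much smaller than $n$. On each block the solid configuration changes by $o(1)$, the conservative liquid dynamics reach conditional equilibrium at the prevailing density, and the time average of $\bar r$ therefore concentrates on $\Psi^{-1}(\rho_{\mathrm{liq}})$; feeding this back into the martingale decomposition closes the argument. The explicit formula for $f^{-1}(u_i)$ is then routine: setting $t_k:=f^{-1}(u_k)$, on $(t_{k+1},t_k]$ precisely the top $k$ sites remain solid and the liquid mass equals $\rho-\sum_{i=1}^k u_i+kf(t)$, so the substitution $s:=\rho-\sum_{i=1}^k u_i+kf(t)$ gives $\dd s=k(1-\Psi^{-1}(s))\dd t$, and integrating between $t_{k+1}$ and $t_k$ yields $t_k-t_{k+1}=\int_{\rho_k}^{\rho_{k-1}}\frac{\dd s}{k(1-\Psi^{-1}(s))}$; summing over $k\ge i$ gives the announced identity.
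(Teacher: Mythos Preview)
Your proposal follows essentially the same two-timescale strategy as the paper: Dynkin's martingale decomposition for single-site occupancies, identification of the drift via the quasi-equilibrium of the liquid phase supplied by Theorem~\ref{th:liquid}, passage to the limiting ODE, and the explicit resolution of the ODE by integrating over the intervals $(t_{k+1},t_k]$ via the substitution $s=\rho-\sum_{i\le k}u_i+kf(t)$.

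The one point where you are imprecise, and where the paper does real work, is the sentence ``an application of Theorem~\ref{th:liquid} to the projected dynamics on the liquid phase, conditioned on the current solid configuration.'' The liquid phase with solid sites feeding it is neither a zero-range process nor conservative, so Theorem~\ref{th:liquid} cannot be applied to it directly, and ``conditional equilibrium'' has no obvious meaning here. The paper's device is a \emph{truncation comparison} (Lemma~\ref{lm:truncation}): it couples the true process $X$ monotonically with the process $\widehat X$ obtained by emptying the $L$ solid sites at the start, observes that $\widehat X$ \emph{is} a genuine zero-range process with $\|\widehat x\|_\infty=o(n)$ so that Theorem~\ref{th:liquid} applies to it, and bounds the empirical-measure discrepancy $\dtv(n^{-1}\sum\delta_{X_i},n^{-1}\sum\delta_{\widehat X_i})$ by $L(1+t)/n$ via the number of solid-to-liquid jumps. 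This gives the crude estimate $\EE\,\dtv\le 3Lt$ after time $nt$; the paper then invokes the Markov property at time $n(t-\varepsilon)$ and sends $\varepsilon\to 0$ to kill the error. Your block-averaging scheme is a legitimate alternative framing, but it would need the same comparison step on each block, since freezing the solid phase alone does not turn the liquid into an autonomous ZRP. For the final identification of the limit the paper uses Skorokhod tightness plus uniqueness of the ODE rather than a direct Gr\"onwall closure; either route works once the quasi-equilibrium $\bar r\approx\Psi^{-1}(\rho_{\mathrm{liq}})$ is in hand.
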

The proof relies on a  \emph{separation of timescales} argument: the liquid phase relaxes so quickly that, on the relevant time-scale, the solid phase can be considered as inert. Consequently, the liquid phase is permanently maintained in a {metastable} state resembling the true equilibrium, except that the density is lower because a macroscopic number of particles are  ``stuck'' in the solid phase. This imposes a simple asymptotic relation between  the number of particles in the solid phase and the dissolution rate, from which the autonomous  equation (\ref{def:hydro}) arises. Note that this limiting description gives access to the dissolution time of the system: for any $t\ge 0$, we have
\begin{eqnarray}
\label{co:solid}
{\|X(nt)\|_\infty}=o(n) & \Longleftrightarrow & t\ge f^{-1}(u_1).
\end{eqnarray}
Combining this with (\ref{co:liquid}), we readily obtain the following complete description of the mixing time.
\begin{corollary}[Mixing time from any state]\label{co:pro} In the regime (\ref{assume:order})-(\ref{assume:profile}), we have for fixed $\varepsilon\in(0,1)$,
\begin{eqnarray}
\label{eq:pro}
\frac{\tmix(x;\varepsilon)}{n} & \xrightarrow[n\to\infty]{} & f^{-1}(u_1)=\sum_{k=1}^\infty\frac{1}{k}\int_{\rho_k}^{\rho_{k-1}}\frac{{\rm d}s}{1-\Psi^{-1}(s)}.
\end{eqnarray}
\end{corollary}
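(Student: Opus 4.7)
The plan is to prove matching upper and lower bounds on $\tmix(x;\varepsilon)/n$ by combining the hydrodynamic limit (Theorem \ref{th:solid}) with Theorem \ref{th:liquid} for the upper bound, and with a distinguishing statistic for the lower bound. The closed-form expression for $f^{-1}(u_1)$ on the right-hand side of \eqref{eq:pro} is simply the case $i=1$ of the formula already given in Theorem \ref{th:solid}, so only the limit itself needs to be proved. Throughout, I assume $u_1>0$; the case $u_1=0$ means $\|x\|_\infty=o(n)$ and follows directly from \eqref{co:liquid}.

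\emph{Upper bound.} Fix $\eta,\delta>0$ and set $s_n:=n(f^{-1}(u_1)+\eta)$. Since $f(f^{-1}(u_1)+\eta)>u_k$ for every $k\ge 1$, Theorem \ref{th:solid} yields $\EE_x[\|X(s_n)\|_\infty/n]\to 0$, so $\PP_x(\|X(s_n)\|_\infty>\delta n)<\varepsilon/2$ for large $n$. Conditioning on $X(s_n)$ and using the Markov property, the total-variation distance to equilibrium at time $s_n+t_n$ is bounded by $\varepsilon/2+\max_{\|y\|_\infty\le\delta n}\|\PP_y(X(t_n)\in\cdot)-\pi\|_\textsc{tv}$. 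Choosing $t_n:=\kappa\delta n+(\ln n)^\kappa$ and invoking Theorem \ref{th:liquid} at precision $\varepsilon/2$ makes the second term $\le\varepsilon/2$, hence $\tmix(x;\varepsilon)\le s_n+t_n$. Dividing by $n$ and sending $\delta,\eta\downarrow 0$ in that order gives $\limsup_n \tmix(x;\varepsilon)/n\le f^{-1}(u_1)$.

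\emph{Lower bound.} Fix $\eta\in(0,f^{-1}(u_1))$, pick $\delta\in(0,u_1-f(f^{-1}(u_1)-\eta))$ (a nonempty interval, since $f$ is continuous and $f(f^{-1}(u_1))=u_1$), and consider the event $A:=\{y\in\Omega\colon\|y\|_\infty\ge\delta n\}$. By Theorem \ref{th:solid}, $X_1(n(f^{-1}(u_1)-\eta))/n\to u_1-f(f^{-1}(u_1)-\eta)>\delta$ in $L^1$, hence $\PP_x(X(n(f^{-1}(u_1)-\eta))\in A)\to 1$. Conversely, since $\Psi^{-1}(\rho)<1$, the product form \eqref{statio} gives geometric tails for the marginal occupancy under $\pi$; a union bound over the $n$ sites then yields $\pi(A)\to 0$. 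Combining these two facts, $\|\PP_x(X(n(f^{-1}(u_1)-\eta))\in\cdot)-\pi\|_\textsc{tv}\ge\PP_x(A)-\pi(A)\to 1$, which forces $\tmix(x;\varepsilon)\ge n(f^{-1}(u_1)-\eta)$ for large $n$; sending $\eta\downarrow 0$ concludes.

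The substantive content is packaged in Theorems \ref{th:liquid} and \ref{th:solid}, so no serious obstacle is expected. The one ingredient that needs to be checked carefully is the stationary tail estimate $\pi(A)\to 0$, which is routine given the product form \eqref{statio} and the fact that the generating series $R(z)$ has radius of convergence $1$ (since $r(k)\to 1$), ensuring that the single-site marginal of $\pi$ has an exponential tail uniformly in $n$.
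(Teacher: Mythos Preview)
Your proposal is correct and takes essentially the same route as the paper, which treats the corollary as an immediate consequence of Theorems \ref{th:liquid} and \ref{th:solid} together with the observation that under $\pi$ the maximum occupancy is $o(n)$; you have simply spelled out the two-sided argument that the paper summarizes in one sentence. The only cosmetic difference is that for the stationary tail bound $\pi(\|y\|_\infty\ge\delta n)\to 0$ the paper invokes \eqref{XXX} in the limit $t\to\infty$, whereas you sketch a direct argument from the product form---both are valid, though the former is slightly quicker since it bypasses the conditioning on the total mass.
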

Note that in the degenerate case where $u_1=0$, the right-hand side of (\ref{eq:pro}) vanishes, in agreement with (\ref{co:liquid}).
From this detailed description, the worst-case mixing time can finally be extracted by maximizing the right-hand side of (\ref{eq:pro}) over all possible profiles $(u_k)_{k\ge 1}$: we trivially always have
\begin{eqnarray}
 \sum_{k=1}^\infty\frac{1}{k}\int_{\rho_k}^{\rho_{k-1}}\frac{{\rm d}s}{1-\Psi^{-1}(s)} & \le & \int_{0}^{\rho}\frac{{\rm d}s}{1-\Psi^{-1}(s)},
\end{eqnarray}
and the equality is moreover attained when $u_1=\rho$ and $u_2=u_3=\ldots=0$. The maximizer corresponds to placing all   particles on the same site, as anticipated. This clearly establishes Theorem \ref{th:main}, and the remainder of the paper is  devoted to the proofs of Theorems \ref{th:liquid} and \ref{th:solid}, in Sections \ref{sec:liquid} and \ref{sec:solid}.

\section{Fast mixing in the absence of a solid phase}
\label{sec:liquid}
\subsection{Preliminaries}
We start by enumerating a few standard facts that will be used repeatedly in the sequel. 
\paragraph{Graphical construction.}\label{rk:graphical}Let $\Xi$ be a Poisson point process  of intensity $\frac 1n\, {\rm d}t\otimes {\rm d}u\otimes\rm{Card}\otimes\rm{Card}$ on $[0,\infty)\times [0,1]\times[n]\times[n]$ (where $\rm{Card}$ denotes the counting measure), and consider the piece-wise constant process $\bX=(X(t)\colon t\ge 0)$ defined by the initial condition $X(0)=x$ and the following jumps: for each point $(t,u,i,j)\in\Xi$,
\begin{eqnarray}
\label{jump}
X(t) & := & 
\left\{
\begin{array}{ll}
X(t-)+\delta_j-\delta_i & \textrm{if }  r\left(X_i(t-)\right)\ge u\\
X(t-) & \textrm{otherwise}.
\end{array}
\right.
\end{eqnarray}
Then ${\bf X}$ is a Markov process with generator $\cL$ and initial state $x$. We always use this construction. 

\paragraph{Monotony.} 
Since $r$ is non-decreasing, the above construction provides a \emph{monotone coupling} of trajectories: if we start from two configurations $x,y\in\Z_+^n$ satisfying  $x\le y$ (coordinate-wise), then this property is preserved by the jumps (\ref{jump}), so the resulting processes $\bf X,\bf Y$ satisfy 
\begin{eqnarray}
\label{monotone}
\forall t\ge 0,\qquad {X}(t) & \le & { Y}(t).
\end{eqnarray}
This classical fact will play an important role in our proof.

\paragraph{Stochastic regularity} For any $i\in[n]$ and $0\le s\le t$, we  have by construction
\begin{eqnarray}
\label{tightness}
-\Xi\left(\left[s,t\right]\times [0,1]\times\{i\}\times [n]\right) & \le \ X_i(t)-X_i(s) \ \le &   \Xi\left(\left[s,t\right]\times [0,1]\times [n]\times \{i\}\right), 
\end{eqnarray}
and the Poisson random variables $\Xi(\cdot)$ appearing on both sides have mean $t-s$. 

\paragraph{Mean-field jump rate.} For any observable $\varphi\colon\Omega\to\R$, the  process ${\bf M}=\left(M(t)\colon t\ge 0\right)$ given by
\begin{eqnarray}
\label{dynkin}
M(t) & := & \varphi\left(X(t)\right)-\varphi\left(x\right)-\int_0^t(\cL \varphi)\left(X(s)\right)\dd s  
\end{eqnarray}
is a zero-mean martingale, see e.g. \cite{MR838085}. In particular, when $\varphi(y)=y_i$ ($i\in[n]$), we obtain
\begin{eqnarray}
\label{dynkin:single}
M(t) & = & X_i(t)-x+\int_0^t r\left(X_i(s)\right)\dd s  - \int_0^t\zeta(s)\dd s, 
\end{eqnarray}
where the \emph{mean-field jump rate} $\zeta(t)$ is defined by 
\begin{eqnarray}
\label{def:zeta}
\zeta(t) & := & \frac 1n\sum_{j=1}^nr\left(X_j(t)\right).
\end{eqnarray}
Understanding the evolution of $\left(\zeta(t)\colon t\ge 0\right)$ will constitute  an important step in the proof. 

\subsection{Uniform downward drift}
Our first task consists in showing that the number of particles on a site can not stay large for long. 

\begin{proposition}[Uniform downward drift]\label{pr:uniform}
There are dimension-free constants $\theta,\delta>0$ such that
\begin{eqnarray}
\label{eq:uniform}
\EE_x\left[e^{\theta X_i(t) }\right] & \le & 2\left(1+e^{\theta (x_i-\delta t)}\right),
\end{eqnarray}
for all $x\in\Omega$, $i\in[n]$ and $t\in\R_+$. In particular, for any $a\ge 0$,
\begin{eqnarray}
\label{XXX}
\PP_x\left(X_i(t)\ge \left[x_i-\delta t\right]_++a\right) & \le & 4e^{-\theta a}.
\end{eqnarray}
\end{proposition}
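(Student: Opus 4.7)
The plan is to derive (\ref{eq:uniform}) from a Foster--Lyapunov drift inequality for the exponential test function $\varphi(x):=e^{\theta x_i}$ with a small parameter $\theta>0$, and then to extract (\ref{XXX}) from (\ref{eq:uniform}) by Markov's inequality.

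First, I would apply the generator $\cL$ directly to $\varphi$. Since only jumps in or out of site $i$ affect $\varphi$, the calculation reduces to
\begin{equation*}
(\cL\varphi)(x)\;=\;\frac{(n-1)\,e^{\theta x_i}}{n}\Bigl[r(x_i)(e^{-\theta}-1)\;+\;\bar{r}_{-i}(x)(e^{\theta}-1)\Bigr],
\end{equation*}
where $\bar{r}_{-i}(x):=\tfrac{1}{n-1}\sum_{j\neq i}r(x_j)$ is the average of $r$ over the other sites. The second term encodes the arrival rate of particles at $i$, the first their departure rate.

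The key step is to bound $\bar{r}_{-i}(x)$ by a dimension-free constant $z^\ast<1$. Here the conservation law $\sum_j x_j=m\le\overline{\rho}\,n$ enters: splitting the sum according to whether $x_j\le K$ or $x_j>K$ and using the pigeonhole estimate $\#\{j\colon x_j>K\}\le m/(K+1)$,
\begin{equation*}
\sum_{j\neq i}r(x_j)\;\le\;(n-1)\,r(K)\;+\;\bigl(1-r(K)\bigr)\frac{\overline{\rho}\,n}{K+1}.
\end{equation*}
Choosing $K$ large enough (depending on $\overline{\rho}$ and $r$ alone) gives $\bar{r}_{-i}(x)\le z^\ast<1$ uniformly. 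Inserting this into the generator expression and expanding the exponentials for small $\theta$ shows that the bracket is $\le -c\theta$ for some $c=c(r,\overline{\rho})>0$ on the set $\{x_i\ge K^\ast\}$, where $K^\ast$ is chosen with $r(K^\ast)>z^\ast$. On the complementary bounded set $\{x_i<K^\ast\}$, both $\varphi$ and $\cL\varphi$ are dimension-free bounded. Combining both regimes yields the drift inequality
\begin{equation*}
(\cL\varphi)(x)\;\le\;-\lambda\,\varphi(x)\;+\;C\qquad\text{for all }x\in\Omega,
\end{equation*}
with dimension-free $\lambda,C>0$.

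By the martingale (\ref{dynkin}) and Gronwall's inequality,
\begin{equation*}
\EE_x\bigl[e^{\theta X_i(t)}\bigr]\;\le\;e^{\theta(x_i-\delta t)}\;+\;C/\lambda,\qquad \delta:=\lambda/\theta,
\end{equation*}
from which (\ref{eq:uniform}) follows after shrinking $\theta$ and absorbing the additive constant into the multiplicative factor $2$. Then (\ref{XXX}) follows from (\ref{eq:uniform}) by Markov's inequality applied to $e^{\theta X_i(t)}$ at threshold $e^{\theta([x_i-\delta t]_++a)}$: a two-case analysis on the sign of $x_i-\delta t$ (using $e^{-\theta(x_i-\delta t)}\le 1$ in the first case and $e^{\theta(x_i-\delta t)}\le 1$ in the second) delivers the constant $4$. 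The main obstacle is the uniform bound $\bar{r}_{-i}\le z^\ast<1$: the pigeonhole sketch needs a dimension-free $K$ with both $r(K)<1$ and $K+1>\overline{\rho}$, and this degenerates in the rate-one model at high density. In that boundary regime, sharpening the Lyapunov function (e.g.\ replacing $x_i$ by $[x_i-K^\ast]_+$ to isolate the excess mass) or invoking the reflection at $\{x_i=0\}$ to supply an additional effective drift would be required.
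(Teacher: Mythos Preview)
Your drift-inequality strategy with the test function $e^{\theta x_i}$ matches the paper's, but the key step fails: the deterministic bound $\bar r_{-i}(x)\le z^\ast<1$ is simply false in general. The assumption (\ref{assume:rates}) allows $r$ to attain the value $1$ at some finite level (the rate-one model $r\equiv 1$ being the extreme case explicitly covered by the paper). Take $r\equiv 1$, $m=n$, and $x=(1,\ldots,1)$: then $\bar r_{-i}(x)=1$, the bracket in your generator computation equals $e^{\theta}+e^{-\theta}-2>0$, and $\cL\varphi(x)>0$. So no dimension-free $z^\ast<1$ exists, and your Foster--Lyapunov inequality $\cL\varphi\le -\lambda\varphi+C$ cannot hold pointwise. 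Your proposed fixes (shifting the Lyapunov function or invoking reflection at $0$) do not help, because the obstruction is the \emph{arrival} rate at site $i$, which is determined by the global configuration and is insensitive to both modifications.

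The paper's remedy is to replace your deterministic pigeonhole bound by a \emph{probabilistic} one valid after a burn-in: Lemma~\ref{lm:uniform} shows that for every $t\ge 1$, $\PP_x(\zeta(t)\ge 1-\varepsilon)\le e^{-\varepsilon n}$, by arguing that a macroscopic fraction of sites become empty during $[0,1]$ via the graphical construction. One then writes $\cL\varphi\le (e^\theta-1)\bigl(\lambda\varphi+2e^{\theta m}{\bf 1}_{A^c}+2e^{\theta k}\bigr)$ with $A=\{\zeta<1-\varepsilon\}$, and the dangerous term $e^{\theta m}\PP_x(A^c)$ is controlled by choosing $\theta\le \varepsilon/\overline\rho$ so that $e^{\theta m}\le e^{\varepsilon n}$ cancels the exponential decay. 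This yields the differential inequality (\ref{ined}) for $t\ge 1$; the interval $[0,1]$ is handled by the crude Poisson domination (\ref{tightness}). Your argument is essentially correct \emph{if} one additionally assumes $r(k)<1$ for all $k$, but the missing idea in full generality is precisely this dynamic creation of empty sites.
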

  The intuition behind this result is as follows: if $X_i(t)$ is large, then by (\ref{assume:rates}) and (\ref{dynkin:single}), the drift of  $X_i(t)$  is essentially $\zeta\left(t\right)-1$, which is uniformly negative thanks to the following lemma. 
\begin{lemma}[Mean-field jump rate]\label{lm:uniform}There is a dimension-free constant $\varepsilon>0$  such that 
\begin{eqnarray*}
\PP_x\left( \zeta(t)\ge 1-\varepsilon \right) & \le & e^{-\varepsilon n},
\end{eqnarray*}
for all $x\in\Omega$ and all $t\in[1,\infty)$, provided $n\ge 2$.
\end{lemma}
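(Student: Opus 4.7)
The plan is to show that, with probability exponentially close to $1$, a positive fraction of sites are empty at time $t$. Since $r(0)=0$ and $r\le 1$, empty sites force $\zeta(t)=\frac{1}{n}\sum_j r(X_j(t))$ away from $1$:
\begin{eqnarray*}
\zeta(t) & \le & 1-\frac{|\{j\colon X_j(t)=0\}|}{n},
\end{eqnarray*}
so it suffices to produce a dimension-free $c>0$ with $\PP_x(|\{j\colon X_j(t)=0\}|\ge cn)\ge 1-e^{-cn}$, uniformly in $x\in\Omega$ and $t\ge 1$.

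I would work conditionally on $\cF_{t-1}$. Setting $K:=\lceil 2\overline{\rho}\rceil$, a pigeonhole bound gives $|J|\ge n-m/(K+1)\ge n/2$ for the $\cF_{t-1}$-measurable set $J:=\{j\colon X_j(t-1)\le K\}$. The naive idea of witnessing $\{X_j(t)=0\}$ via ``no arrivals at $j$'' plus ``enough accepted departures from $j$'' fails, because for two sites $j,j'\in J$ these events share Poisson points of $\Xi$ (the source-$j$ destination-$j'$ points, and symmetrically), which breaks the independence needed for exponential concentration. I would remedy this by fixing an $\cF_{t-1}$-measurable subset $J_1\subseteq J$ of size $\lfloor|J|/2\rfloor$ (so $n/4\le|J_1|\le n/2$) and restricting the departures in the above recipe to destinations in $[n]\setminus J_1$.

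Concretely, for $j\in J_1$ I would set $E_j:=A_j\cap B_j$, where $A_j$ is the event that no point of $\Xi$ of the form $(s,u,i,j)$ lies in $[t-1,t]$, and $B_j$ is the event that, using only the source-$j$ points of $\Xi$ in $[t-1,t]$ with destination in $[n]\setminus J_1$ (thinned by the usual acceptance rule against the process's own count), a restricted emptying process $\widetilde X_j$ reaches $0$ by time $t$. A direct monotonicity check gives $X_j(t)\le\widetilde X_j(t)$, hence $E_j\subseteq\{X_j(t)=0\}$. By construction, $A_j$ is measurable with respect to the points of $\Xi$ having destination $j$, while $B_j$ is measurable with respect to those with source $j$ and destination in $[n]\setminus J_1$; for $j\ne j'$ in $J_1$, any Poisson point common to the data for $E_j$ and the data for $E_{j'}$ would need both source and destination inside $J_1$, contradicting the destination restriction in $B$. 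Thus, conditional on $\cF_{t-1}$, the events $(E_j)_{j\in J_1}$ are mutually independent.

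A direct Poisson calculation yields $\PP(A_j)=e^{-1}$, and since $|J_1|\le n/2$ the restricted departure rate is at least $\frac{n-|J_1|}{n}r(1)\ge r(1)/2$, so the restricted emptying time from $X_j(t-1)\le K$ is stochastically dominated by a $\Gamma(K,r(1)/2)$ variable, giving $\PP(B_j\mid A_j,\cF_{t-1})\ge c$ for a dimension-free $c>0$. Hence $\PP(E_j\mid\cF_{t-1})\ge p:=c/e$, and Hoeffding's inequality forces $\sum_{j\in J_1}\1_{E_j}\ge p|J_1|/2\ge pn/8$ with conditional probability at least $1-e^{-p^2n/8}$. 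Integrating over $\cF_{t-1}$ and setting $\varepsilon:=p^2/8$ finishes the proof. The combinatorial decoupling via the restriction to destinations in $J_1^c$ is, in my view, the delicate step; everything else is a relatively routine concentration argument once independence is in hand.
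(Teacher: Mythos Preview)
Your proof is correct and follows essentially the same approach as the paper: bound $\zeta(t)$ by the fraction of nonempty sites, isolate a large set of low-occupancy sites, and for each such site witness emptiness by forbidding arrivals and forcing enough departures whose destinations are restricted so as to decouple the witnessing events, then apply Hoeffding. The paper's version is marginally leaner in two cosmetic respects---it reduces to $t=1$ via the Markov property rather than conditioning on $\cF_{t-1}$, and it routes departures to the complement $\cI^c$ of the low-occupancy set (so no halving of $J$ is needed) while counting Poisson points with $u\le r(1)$ directly instead of running an auxiliary process $\widetilde X_j$---but the decoupling idea and the concentration step are identical.
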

\begin{proof}
Since $r$ is $[0,1]-$valued with $r(0)=0$, we clearly have
\begin{eqnarray}
\zeta(t) & \le &  1-\frac{1}{n}\sum_{j=1}^n{\bf 1}_{(X_j(t)=0)}.
\end{eqnarray}
Now, because of (\ref{assume:bounded}), we can find a region $\cI\subseteq [n]$ of size $|\cI|=\lceil n/2\rceil$ such that $\max_{i\in \cI} x_i\le 2\overline{\rho}$. Note that $|\cI^c|=\lfloor n/2\rfloor\ge n/3$, since $n\ge 2$. For each site $i\in \cI$, consider the ``good" event
\begin{eqnarray*}
G_i & := & \left\{\Xi\left([0,1]\times [0,1]\times[n]\times\{i\}\right)=0\right\}\bigcap \left\{\Xi\left([0,1]\times [0,r(1)]\times\{i\}\times \cI^c\right)\ge 2\overline{\rho}\right\}.
\end{eqnarray*}
The first part forbids any new arrival at $i$ during the time-interval $[0,1]$, while the second ensures that the $x_i\le 2\overline{\rho}$ particles will depart (recall that $r(k)\ge r(1)$ for $k\ge 1$). Thus, $G_i\subseteq \{X_i(1)=0\}$. Writing $\cP(\lambda;k)$ for the probability that a Poisson variable with mean $\lambda$ is at least $k$, we have 
\begin{eqnarray}
\label{goodevent}
\PP\left(G_i\right) & = & e^{-1}\cP\left(\frac{r(1)\lfloor n/2\rfloor}n;\lfloor 2\overline{\rho}\rfloor\right)\ \ge \ e^{-1}\cP\left(\frac{r(1)}{3};\lfloor 2\overline{\rho}\rfloor\right)=:q.
\end{eqnarray}
Since the events $(G_i)_{i\in \cI}$ are moreover independent, we conclude that the sum $\sum_{i=1}^n{\bf 1}_{(X_i(1)=0)}$ stochastically dominates a Binomial random variable with parameters $\lceil n/2\rceil$ and $q$. In particular, Hoeffding's inequality implies
\begin{eqnarray*}
\PP\left(\sum_{i=1}^n{\bf 1}_{(X_i(1)=0)}\le \frac{nq}{4} \right) & \le & \exp\left(-\frac{nq^2}{4}\right),
\end{eqnarray*}
 so that $\varepsilon=q^2/4$ satisfies the claim for $t=1$. Since the result is uniform in the choice of the initial state $x\in\Omega$, the claim automatically propagates to any time $t\ge 1$  by the Markov property. 
\end{proof}

\begin{proof}[Proof of Proposition \ref{pr:uniform}]For any observable $\varphi\colon \Omega\to\R$, the formula (\ref{dynkin}) implies that
\begin{eqnarray}
\label{dynkinexp}
\frac{{\rm d}}{{\rm d}t}\, \EE_x\left[\varphi\left(X(t)\right)\right] & = &  \EE_x\left[\left(\cL \varphi\right)\left(X(t)\right)\right].
\end{eqnarray}
We take $\varphi(y)=e^{\theta y_i}$, and we bound $\cL \varphi$ in terms of $\varphi$ to obtain a differential inequality. We have
\begin{eqnarray*}
\frac{\left(\cL \varphi\right)(y)}{\varphi(y)} & = & \left(e^{\theta}-1\right)\left\{\frac{1}{n}\sum_{j\in[n]\setminus\{i\}}\left(r\left(y_j\right)-e^{-\theta}r\left(y_i\right)\right)\right\}.
\end{eqnarray*}
The  term $\{\cdot\}$ is always less than $1$, and is even less than  $\lambda :=1-\varepsilon-e^{-\theta}r(k)$ if $y\in A\cap B$, where   
\begin{eqnarray}
A   :=  \left\{\frac{1}{n}\sum_{j=1}^nr(y_j)< 1-\varepsilon\right\} & \textrm{ and } & B:=\left\{y_i\ge k\right\}.
\end{eqnarray}
The parameters $\varepsilon\in(0,1)$ and $k\in\N$ are arbitrary for now and will be adjusted later. Thus,
\begin{eqnarray*}
{\cL \varphi} & \le & \left(e^\theta-1\right)\left(\lambda \varphi+(1-\lambda)\varphi{\bf 1}_{(A\cap B)^c}\right)\\
& \le & \left(e^\theta-1\right)\left(\lambda \varphi+2 \varphi\left({\bf 1}_{A^c}+{\bf 1}_{B^c}\right)\right)\\
 & \le &  \left(e^\theta-1\right)\left(\lambda \varphi+2e^{\theta m}{\bf 1}_{A^c}+2e^{\theta k}\right),
\end{eqnarray*}
where we have used the simple facts $\lambda\in(-1,1)$, $\|\varphi\|_\infty = e^{\theta m}$ and $\varphi{\bf 1}_{B^c}\le e^{\theta k}$.
By (\ref{dynkinexp}), we obtain
\begin{eqnarray}
\frac{{\rm d}}{{\rm d}t}\, \EE_x\left[e^{\theta X_i(t)}\right]  & \le & \left(e^\theta-1\right)\left\{\lambda \EE_x\left[e^{\theta X_i(t)}\right]+2e^{\theta m}\PP_x\left(\zeta(t)\ge 1-\varepsilon\right)+2e^{\theta k}\right\}.
\end{eqnarray}
We now choose the dimension-free constants $\theta,\varepsilon,k$ as follows. We take $\varepsilon$ as in Lemma \ref{lm:uniform}. Since $\lambda\to -\varepsilon$ as $(k,\theta)\to(\infty,0)$, we may then choose $k\in\N$ and $\theta>0$  so that $\lambda<0$. Upon further reducing $\theta$ if necessary, we may assume that $\theta  \le {\varepsilon}/\overline{\rho}$, so that $\theta m\le \varepsilon n$. For $t\ge 1$, we then have
\begin{eqnarray}
\label{ined}
\frac{{\rm d}}{{\rm d}t}\, \EE_x\left[e^{\theta X_i(t)}\right]  & \le & \alpha - \delta \EE_x\left[e^{\theta X_i(t)}\right] ,
\end{eqnarray}
where $\alpha,\delta>0$ are dimension-free constants.   It is classical that this differential inequality  implies 
\begin{eqnarray}
\EE_x\left[e^{\theta X_i(t)}\right] & \le & \frac{\alpha}{\delta}+\left(\EE_x\left[e^{\theta X_i(1)}\right]-\frac{\alpha}{\delta}\right)e^{-\delta (t-1)},
\end{eqnarray}
for $t\ge 1$. On the other hand, for $t\in[0,1]$, the domination (\ref{tightness}) implies $\EE_x\left[e^{\theta X_i(t)}\right]\le e^{\theta x+e^{\theta}-1}$. Combining these two facts, we conclude that there is a dimension-free $\kappa\in(0,\infty)$ such that 
\begin{eqnarray}
\label{ineq:unif}
\EE_x\left[e^{\theta X_i(t)}\right] & \le & \kappa\left(1+e^{\theta x_i-\delta t}\right),
\end{eqnarray}
for all $t\ge 0$. By Jensen's inequality and $(1+u)^p\le 1+u^p$, the conclusion still holds if we replace  $(\kappa,\theta,\delta)$ with $(\kappa^p,\theta p,\delta p)$ for any $p\in(0,1)$. Choosing $p$ sufficiently small so that $\kappa^p\le 2$ and $\theta p\le 1$ completes the proof of (\ref{eq:uniform}). The claim (\ref{XXX}) is then a consequence of  Chernov's bound. 
\end{proof}

\subsection{Path coupling via tagged particles}

Our proof of Theorem \ref{th:liquid} will rely on the introduction of \emph{tagged particles}. For $k\in\Z_+$, define
\begin{eqnarray}
\Delta(k) & := & r(k+1)-r(k) \ \ge \ 0.
\end{eqnarray}
Let $\Theta$ be a Poisson process of intensity $\frac{1}{n}\textrm{Leb}\otimes\textrm{Leb}\otimes\textrm{Card}$ on $\R_+\times[0,1]\times [n]$, independent of the Poisson process $\Xi$ used in the graphical construction of $\bX$, and  construct an $[n]-$valued process $\bI=(I(t)\colon t\ge 0)$ by setting $I(0)=i$ and imposing the following jumps: for each $(t,u,k)$ in $\Theta$, 
\begin{eqnarray}
\label{jumpI}
I(t) & := & 
\left\{
\begin{array}{ll}
k & \textrm{if }\Delta\left(X_{I(t-)}(t-)\right)\ge u \\
I(t-) & \textrm{else}.
\end{array}
\right.
\end{eqnarray}
In other words, conditionally on the \emph{background process} $\bX$, the \emph{tagged particle} $\bI$ performs a time-inhomogeneous random walk starting from $i$ and jumping from a site $\ell$ to a uniformly chosen site at the time-varying rate $\Delta\left(X_{\ell}(t)\right)$. 
The elementary but crucial observation is that  the process 
\begin{eqnarray}
\label{superpose}
\left(X(t)+\delta_{I(t)}\colon t\ge 0\right)
\end{eqnarray}
is then distributed as a zero-range process starting from $x+\delta_i$. Now, if $j$ is another site, we may introduce a second tagged particle $\bJ=(J(t)\colon t\ge 0)$ by setting $J(0)=j$ and for each $(t,u,k)$ in $\Theta$, 
\begin{eqnarray}
\label{jumpJ}
J(t) & := &
\left\{
\begin{array}{ll}
k & \textrm{if }\Delta\left(X_{J(t-)}(t-)\right)\ge u \\
J(t-) & \textrm{else}.
\end{array}
\right.
\end{eqnarray}
We emphasize that we use the same processes $\bX,\Theta$ to generate $\bI$ and $\bJ$. This produces two coupled zero-range processes $\left(X(t)+\delta_{I(t)}\colon t\ge 0\right)$ and $\left(X(t)+\delta_{J(t)}\colon t\ge 0\right)$  starting from $x+\delta_i$ and $x+\delta_j$. We clearly have $\{I(s)=J(s)\}\subseteq\{I(t)=J(t)\}$ for $s\le t$, and we will estimate the \emph{coalescence time}:
\begin{eqnarray}
\tau & := & \inf\left\{t\ge 0\colon I(t)=J(t)\right\} \ = \ \sup\ \left\{t\ge 0\colon I(t)\ne J(t)\right\}.
\end{eqnarray}
\begin{proposition}[Coalescence time]\label{pr:coupling}There is a dimension-free constant $\kappa<\infty$ such that 
\begin{eqnarray*}
\PP\left(\tau>\kappa\left(\|x\|_\infty\vee (\ln n)^\kappa\right)\right) & \le & \frac{\kappa}{n^2}.
\end{eqnarray*}
\end{proposition}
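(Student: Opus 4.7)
My plan is to split the interval $[0, T_1 + T_2]$ into a \emph{dissolution phase} $[0, T_1]$ with $T_1 := \|x\|_\infty / \delta$ and a \emph{coalescence phase} $[T_1, T_1 + T_2]$ with $T_2 := (\ln n)^{\kappa}$ for suitable dimension-free constants, so that $T_1 + T_2 \le \kappa(\|x\|_\infty \vee (\ln n)^{\kappa})$. In the dissolution phase I would first apply the pointwise drift estimate (\ref{XXX}) at each site with threshold $a := \theta^{-1}\cdot 4\ln n$. The choice of $T_1$ makes $[x_i - \delta T_1]_+ = 0$ for every $i$, so (\ref{XXX}) gives $\PP(X_i(T_1) \ge a) \le n^{-4}$; a union bound then yields $\PP(\|X(T_1)\|_\infty \ge a) \le n^{-3}$. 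Iterating this at each integer time of $[T_1, T_1 + T_2]$, and bridging consecutive integer times via the deterministic fluctuation bound (\ref{tightness}) combined with a Poisson tail estimate, produces the \emph{liquid event}
\[
G \ := \ \Bigl\{\sup_{T_1 \le t \le T_1 + T_2} \|X(t)\|_\infty \le (\ln n)^{\kappa}\Bigr\}, \qquad \PP(G) \ge 1 - n^{-2}.
\]

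For the coalescence phase, the key observation is that whenever $X_{I(t-)}(t-) = X_{J(t-)}(t-) = 0$, \emph{every} Poisson event $(t, u, k) \in \Theta$ with $u \le r(1) = \Delta(0)$ sends both tagged particles to the common destination $k$ and thus causes coalescence. Consequently the instantaneous coalescence rate is at least $r(1)\,\mathbf{1}_{X_{I(t)}(t) = X_{J(t)}(t) = 0}$, and a standard martingale-survival argument reduces the task to showing that the time spent on the joint event $\{X_I = X_J = 0\}$ during $[T_1, T_1 + T_2]$ exceeds $4(\ln n)/r(1)$ with probability $\ge 1 - n^{-3}$ on $G$. To do so I would first show that each tagged particle individually spends an $\Omega(1)$ fraction of any sufficiently long interval on an empty site, combining: (a) the fact, obtained in the proof of Lemma \ref{lm:uniform}, that a positive fraction of sites remain empty at any time, so that uniformly-chosen jump destinations are empty with probability $\ge \varepsilon$; and (b) Proposition \ref{pr:uniform} applied to the local occupancy, ensuring that either the particle moves within polylog time (and then lands on an empty site with probability $\ge \varepsilon$) or else its local environment drains to $0$ within polylog time (after which the particle moves at rate $r(1)$). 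A second-moment or monotone-comparison decorrelation step then upgrades these per-particle fractions into an analogous joint fraction on $\{X_I = X_J = 0\}$.

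The main obstacle is that $\Delta(k) = r(k+1) - r(k)$ may vanish at intermediate occupancies $k > 0$, which can temporarily trap a tagged particle on a non-empty site and block its progress toward an empty site. This is circumvented by combining the strict positivity $\Delta(0) = r(1) > 0$ with the uniform downward drift of Proposition \ref{pr:uniform}: together they force the local environment of any tagged particle to reach the ``mobile'' value $0$ within polylog time. A secondary challenge is passing from individual time-on-empty statistics to joint statistics on $\{X_I = X_J = 0\}$; given the positive density of empty sites and the uniformly distributed jump destinations, I expect a standard decorrelation argument to suffice.
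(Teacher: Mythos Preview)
Your dissolution phase and the construction of the liquid event $G$ are correct and match the paper's use of (\ref{XXX}); the overall two-phase decomposition is also the paper's structure.

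The genuine gap is the ``decorrelation step'' in the coalescence phase. You correctly identify the coalescence mechanism: whenever both tagged particles sit on empty background sites, any $\Theta$-event with $u\le r(1)=\Delta(0)$ sends them to the same destination. But lower-bounding the Lebesgue measure of $\{t:X_{I(t)}(t)=X_{J(t)}(t)=0\}$ is not a matter of standard decorrelation. The two walks $\bI,\bJ$ are driven by the \emph{same} Poisson process $\Theta$ in the \emph{same} random environment $\bX$, and before coalescence they can be badly out of phase: if $I$ sits on an empty site while $J$ is at a site with $\Delta(X_{J})=0$, every $\Theta$-event with $u\le r(1)$ moves $I$ (possibly onto a non-empty site) without moving $J$; by the time $J$'s site has drained, $I$ may itself be stuck. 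Neither a second-moment bound nor a monotone comparison obviously controls this alternation, and you have not said what the monotone structure or the relevant variance estimate would be. This is the missing idea, not a routine step.

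The paper sidesteps the issue entirely by constructing an explicit favorable scenario (Lemma~\ref{lm:coal}): over a window of length $h=O\!\left(X_{I(t)}(t)\vee X_{J(t)}(t)\right)$ it asks that (i) no $\Theta$-event occurs in an initial sub-window, so both tagged particles stay put; (ii) neither of their two sites receives any background arrival, while each emits enough departures to empty out; and (iii) a single $\Theta$-event with $u\le r(1)$ then fires. These three pieces are independent Poisson events, giving probability at least $e^{-O(h)}$. Combined with the preliminary bound $X_{I(t)}(t)\vee X_{J(t)}(t)=O(\log(1+\|x\|_\infty))$ of Lemma~\ref{lm:cooc}, this yields a per-attempt success probability of order $(1+\|x\|_\infty)^{-\beta}$; iterating over $(\ln n)^{2+\beta}$ windows of length $O(\ln n)$ each, and re-checking $\|X\|_\infty\le O(\ln n)$ at the start of each window via (\ref{XXX}), gives the $O(n^{-2})$ bound. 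The ``freeze both particles while their sites drain'' trick is what replaces your decorrelation step, and once you have it the time-averaging machinery becomes unnecessary.
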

Let us first quickly see how this leads to Theorem \ref{th:liquid}.
\begin{proof}[Proof of Theorem \ref{th:liquid}]
Set $P_x^t=\PP_x(X(t)\in\cdot)$. By stationarity of $\pi$ and convexity of $\dtv(\cdot,\cdot)$, we have
\begin{eqnarray}
\label{convex}
\dtv\left(P_x^t, \pi\right) 
& \le & \sum_{y\in\Omega}\pi(y)\, \dtv\left(P_x^t,P_y^t\right).
\end{eqnarray}
Call $x,y\in\Omega$ \emph{adjacent}  if they differ by a single jump, i.e. 
$y=x+\delta_j-\delta_i$ for some $1\le i\ne j\le n$. When this is the case,  Proposition \ref{pr:coupling} (with $m-1$ background particles)  ensures that
 \begin{eqnarray}
t \ge \kappa\left(\|x\|_\infty\vee \|y\|_\infty\vee(\ln n)^\kappa\right) & \Longrightarrow & \dtv\left(P_x^t, P_y^t\right)  \le  \frac{\kappa}{n^2}.
 \end{eqnarray} 
Now if $x,y\in\Omega$ are arbitrary, one can always connect them by a path, i.e. a sequence $(w_0,w_1,\ldots,w_k)$  where $w_0=x$, $w_k=y$ and $w_{\ell-1}$ is adjacent to $w_{\ell}$ for $1\le \ell \le k$.  
By the triangle inequality, we have
\begin{eqnarray}
\dtv\left(P_x^t,P_y^t\right) & \le & \sum_{\ell=1}^k\dtv\left(P^t_{w_{\ell-1}},P^t_{w_{\ell}}\right).
\end{eqnarray}
Choosing a shortest path further ensures  that $k\le m$ and $\max_{1\le \ell<k}\|w_\ell\|_\infty \le \|x\|_\infty\vee\|y\|_\infty$, so that
 \begin{eqnarray}
 \label{triangle}
t \ge \kappa\left(\|x\|_\infty\vee \|y\|_\infty\vee(\ln n)^\kappa\right) & \Longrightarrow & \dtv\left(P_x^t, P_y^t\right)  \le  \frac{\kappa m}{n^2}.
 \end{eqnarray} 
In particular, if $t\ge \kappa\left(\|x\|_\infty\vee(\ln n)^\kappa\right)$, then the restriction of the sum in (\ref{convex}) to the index set $A:=\left\{y\in\Omega\colon \|y\|_\infty\le (\ln n)^\kappa\right\}$ is at most $\frac{\kappa m}{n^2}$.  On the other hand, the remaining part is at most
$
\pi(A^c) \le  e^{-\theta (\ln n)^\kappa},
$
as can be seen by taking the $t\to\infty$ limit in (\ref{XXX}). In conclusion, for any $x\in\Omega$,
 \begin{eqnarray}
t \ge \kappa\left(\|x\|_\infty\vee(\ln n)^\kappa\right) & \Longrightarrow & \dtv\left(P_x^t, \pi\right)  \le  \frac{m\kappa}{n^2}+4e^{-\theta (\ln n)^\kappa}.
 \end{eqnarray} 
Upon replacing $\kappa$ by a larger constant if necessary, we obtain the claim. 
\end{proof}
The remainder of the section is devoted to the proof of Proposition \ref{pr:coupling}. It is clear from (\ref{jumpI}),(\ref{jumpJ}) that if the two tagged particles manage to jump at the same time, then they  immediately coalesce. Note, however, that their jumps may be severely hindered by the background process: in the rate-one case for example, we have $\Delta(k)={\bf 1}_{(k=0)}$, so that the tagged particles can not jump unless they are alone! Our first step will thus consist in controlling the number of co-occupants of the tagged particles. We will then complement this estimate by showing that, when the tagged particles do not have too many co-occupants, they have a decent chance to coalesce within a short time-interval. 
\begin{lemma}[Co-occupants of the tagged particles]\label{lm:cooc}There exist dimension-free constants $\kappa_1,\kappa_2<\infty$ such that for $t=\kappa_1\|x\|_\infty$ and $a= \kappa_2\ln(1+\|x\|_\infty)$, we have
\begin{eqnarray}
\PP\left(X_{I(t)}(t)\vee Y_{J(t)}(t)\le  a\right) & \ge & \frac{1}{2}.
\end{eqnarray}
\end{lemma}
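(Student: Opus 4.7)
The key reduction is to establish a dimension-free exponential moment bound $\EE_x[e^{\theta X_{I(t)}(t)}] \le C$ at $t = \kappa_1 \|x\|_\infty$, for a suitably small $\theta > 0$ and large $\kappa_1$. Granting this, Markov's inequality gives $\PP(X_{I(t)}(t) \ge a) \le Ce^{-\theta a}$. Choosing $a = \kappa_2 \ln(1+\|x\|_\infty)$ with $\kappa_2$ large enough that $Ce^{-\theta\kappa_2\ln 2} \le 1/4$, we get $\PP(X_{I(t)}(t) \le a) \ge 3/4$ for every $\|x\|_\infty \ge 1$ (the case $\|x\|_\infty = 0$ being trivial). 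The identical argument for $\bJ$ combined with a union bound yields the claim.

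To prove the moment bound, I adapt the generator-based drift argument from Proposition~\ref{pr:uniform} to the coupled Markov process $(\bX,\bI)$. Applying its joint generator $\cL$ to the Lyapunov function $V(y,i) := e^{\theta y_i}$ decomposes into a ``background'' part (changes to $y_i$ via jumps of $\bX$) and a ``reset'' part (jumps of $\bI$ to a uniform site, at rate $\Delta(y_i)$):
\[
\cL V(y,i) \;=\; e^{\theta y_i}\Bigl[\zeta(y)(e^{\theta}-1) + r(y_i)(e^{-\theta}-1) - \Delta(y_i)\Bigr] + \Delta(y_i)\,\bar V(y) + \cO(1/n),
\]
where $\bar V(y) := n^{-1}\sum_k e^{\theta y_k}$. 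The core claim is that, on the high-probability event $\{\zeta \le 1-\varepsilon\}$ of Lemma~\ref{lm:uniform}, a drift inequality of the form $\cL V \le -\delta_0 V + C_0\bigl(1+\bar V(y)\bigr)$ holds, with dimension-free constants $\delta_0, C_0 > 0$. This follows by splitting on $y_i$: when $y_i \ge k^*$ with $k^*$ chosen so that $r(k^*) \ge 1-\varepsilon/2$, the bracket is $\le -\theta\varepsilon/4$ and $\Delta(y_i) \le \varepsilon/2$; when $y_i < k^*$, the Lyapunov function $V$ is itself bounded by $e^{\theta k^*}$ and the inequality is trivial after enlarging $C_0$. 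The complementary event $\{\zeta > 1-\varepsilon\}$ has probability $\le e^{-\varepsilon n}$ and contributes at most $\|V\|_\infty e^{-\varepsilon n} \le e^{\theta m -\varepsilon n}$, which is negligible provided $\theta \le \varepsilon/(2\overline{\rho})$ and $n \ge \kappa/\varepsilon$.

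Dynkin's formula then yields $\tfrac{d}{dt}\EE[V(X(t),I(t))] \le -\delta_0 \EE[V(X(t),I(t))] + C_0\bigl(1 + \EE[\bar V(X(t))]\bigr)$. Applying Proposition~\ref{pr:uniform} coordinate-wise bounds $\EE_x[\bar V(X(t))] \le 2 + 2\overline{\rho}\, e^{\theta\|x\|_\infty - \delta t}$. Integrating the ODE inequality via the integrating factor $e^{\delta_0 t}$ and using the trivial initial bound $V(x,i) \le e^{\theta\|x\|_\infty}$,
\[
\EE_x\bigl[V(X(t),I(t))\bigr] \;\le\; \cO\!\left(1 + e^{\theta\|x\|_\infty - \delta_0 t} + e^{\theta\|x\|_\infty - (\delta_0\wedge\delta)t}\right).
\]
Choosing $\theta$ small enough that $\delta_0 < \delta$ and $\kappa_1 > \theta/\delta_0$, the exponential terms are $\le 1$ at $t = \kappa_1\|x\|_\infty$, so $\EE_x[V(X(t),I(t))] = \cO(1)$ as required.

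The main obstacle is the drift calculation on the coupled process, specifically the ``reset'' term $\Delta(y_i)\bar V(y)$ from tagged-particle jumps, which couples the evolution of $V$ to the global quantity $\bar V$ and forces us to carry an auxiliary bound through the argument. Conceptually, this reset is a feature rather than a bug: when $y_i$ is small and the background dynamics alone do not produce a sufficiently negative drift of $e^{\theta y_i}$, the tagged particle's jumps quickly refresh its location to a site of typical (dimension-free) occupancy, preventing $X_{I(t)}(t)$ from growing.
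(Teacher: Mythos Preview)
Your approach is correct but takes a substantially different route from the paper. The paper's proof is a five-line argument exploiting one structural fact you do not use: the Poisson process $\Theta$ driving the tagged particle is \emph{independent} of the background process $\bX$. Consequently, the random set of sites that $I(t)$ or $J(t)$ can possibly occupy, namely $\{i,j\}\cup\{k:\Theta([0,t]\times[0,1]\times\{k\})\ge 1\}$, is independent of $\bX$ and has expected cardinality at most $2+t$. A union bound over this set, combined with the single-site tail bound~(\ref{XXX}), immediately gives $\PP(X_{I(t)}(t)\vee X_{J(t)}(t)>a)\le (2+t)\max_k\PP(X_k(t)>a)$, and the choice $t=\|x\|_\infty/\delta$, $a=\theta^{-1}\ln(16+8t)$ finishes. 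Your generator/Lyapunov argument on the joint chain $(\bX,\bI)$ is considerably longer but has two compensating advantages: it does not rely on the independence of $\Theta$ and $\Xi$ (so it would survive under more intricate couplings of tagged particle and environment), and it actually yields the sharper conclusion $\EE_x[e^{\theta X_{I(t)}(t)}]=\cO(1)$, so that a \emph{constant} threshold $a$ would already suffice---the paper's union bound loses a factor $t\asymp\|x\|_\infty$, which is why it needs $a$ of logarithmic order. One small point in your write-up: the claim ``choosing $\theta$ small enough that $\delta_0<\delta$'' deserves a word of justification, since Proposition~\ref{pr:uniform} fixes specific constants $(\theta,\delta)$; the clean way is to note (via Jensen, as in the last lines of the proof of Proposition~\ref{pr:uniform}) that the bound there persists for any smaller $\theta'\le\theta$ with the \emph{same} $\delta$, while your drift constant $\delta_0\asymp\theta'\varepsilon$ scales down with $\theta'$.
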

\begin{proof}
Since $I(t),J(t)$ can only move by jumps of the form (\ref{jumpI}),(\ref{jumpJ}), we necessarily have
\begin{eqnarray}
I(t),J(t) & \in & \{i,j\}\cup \left\{k\in[n]\colon \Theta\left([0,t]\times[0,1]\times\{k\}\right)\ge 1\right\}. 
\end{eqnarray}
Note that the random set on the right-hand side contains at most $2+t$ elements on average.
Taking a union bound over all these possibilities, and using the independence of $\Theta,\bX$, we obtain
\begin{eqnarray*}
\PP\left(X_{I(t)}(t)\vee X_{J(t)}(t)> a\right) & \le  & \left(2+t\right)\max_{k\in[n]}\PP\left(X_k(t)> a\right).
\end{eqnarray*}
To make this less than a half, we may choose $t=\frac{1}{\delta}\|x\|_\infty$ with $\delta$ as in (\ref{XXX}), and $a=\frac{1}{\theta}\ln(16+8t)$.
\end{proof}

\begin{lemma}[Quick coalescence]\label{lm:coal}Setting $h:=\frac{3(x_i\vee x_j)+1}{r(1)}$, we have $\PP\left(\tau \le h\right)  \ge \frac{e^{-3h}}{8}$, provided $n\ge 3$.
\end{lemma}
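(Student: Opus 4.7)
The plan is to produce an explicit favorable event $E$, defined on pairwise disjoint regions of the Poisson processes $\Xi$ and $\Theta$, on which the two tagged particles are forced to coalesce by time $h$; the product structure will then directly deliver the claimed lower bound. The guiding intuition is the one already stressed: if the background sites $i$ and $j$ both become empty and the tagged particles $I,J$ are still located there, then $\Delta(X_I)=\Delta(X_J)=\Delta(0)=r(1)$, so a single $\Theta$-point $(t^*,u^*,k^*)$ with $u^*\le r(1)$ forces both $I$ and $J$ to jump simultaneously to the same site $k^*$, producing coalescence.

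Assuming $i\neq j$ (otherwise $\tau=0$ and the claim is trivial), set $\sigma:=h-1/r(1)=3(x_i\vee x_j)/r(1)\ge 0$, and let $E=E_1\cap\cdots\cap E_5$, where $E_1$ is the event that $\Xi$ has no points in $[0,h]\times[0,1]\times[n]\times\{i,j\}$ (no arrivals at $i$ or $j$); $E_2$ (resp.\ $E_3$) is the event that $\Xi$ has at least $x_i$ (resp.\ $x_j$) points in $[0,\sigma]\times[0,r(1)]\times\{i\}\times([n]\setminus\{i,j\})$ (resp.\ with third coordinate $j$), forcing enough effective departures to empty site $i$ (resp.\ $j$) by time $\sigma$; $E_4$ forbids any $\Theta$-point on $[0,\sigma]\times[0,1]\times[n]$, so that the tagged particles stay at $i,j$ until $\sigma$; and $E_5$ requires at least one $\Theta$-point on $[\sigma,h]\times[0,r(1)]\times[n]$. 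These five regions are pairwise disjoint (in $\Xi$ they are separated by the third/fourth coordinate, in $\Theta$ by the time coordinate) and $\Xi\perp\Theta$, so the five subevents are independent.

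The first step is the deterministic inclusion $E\subseteq\{\tau\le h\}$. Under $E_1\cap E_2$, each of the $\ge x_i$ Poisson points of $E_2$ triggers a genuine departure from site $i$ (because $u\le r(1)\le r(X_i(\cdot-))$ as long as $X_i\ge 1$) and no arrival occurs, so $X_i(\sigma)=0$; the absence of arrivals in $E_1$ keeps $X_i(t)=0$ for all $t\in[\sigma,h]$, and similarly for $j$. Under $E_4$, $I(\sigma)=i$ and $J(\sigma)=j$. The \emph{main obstacle} of the argument is the possibility that $\Theta$-events with $u>r(1)$ drag the tagged particles away after $\sigma$ before the designated coalescing event: generically $\Delta(k)>0$ for all $k$, so a priori nothing confines $I,J$ to their starting sites. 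The circumvention comes from exploiting the identity $\Delta(0)=r(1)$: once sites $i,j$ are empty, $\Delta(X_{I(\cdot-)})=\Delta(X_{J(\cdot-)})=r(1)$, so $\Theta$-events with $u>r(1)$ have $u>\Delta$ and do not move $I$ or $J$. Consequently both tagged particles remain at $i,j$ until the first $\Theta$-point of $E_5$, at which they both jump to the same site $k^*$ and coalesce, necessarily by time $h$.

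It remains to bound $\PP(E)$ from below. A direct Poisson computation gives $\PP(E_1)=e^{-2h}$, $\PP(E_4)=e^{-\sigma}$, and $\PP(E_5)=1-e^{-(h-\sigma)r(1)}=1-e^{-1}\ge 1/2$. The events $E_2$ and $E_3$ involve Poisson variables with mean $\sigma r(1)(n-2)/n\ge \sigma r(1)/3 = x_i\vee x_j$, which is at least $x_i$ (resp.\ $x_j$), using $n\ge 3$; since the median of $\mathrm{Poisson}(\lambda)$ exceeds $\lambda-\ln 2$, these probabilities are both $\ge 1/2$. Multiplying,
\[
\PP(\tau\le h)\ge \PP(E)\ge e^{-2h}\cdot\tfrac12\cdot\tfrac12\cdot e^{-\sigma}\cdot\tfrac12=\frac{e^{-(2h+\sigma)}}{8}=\frac{e^{-3h+1/r(1)}}{8}\ge\frac{e^{-3h}}{8},
\]
which is the desired bound.
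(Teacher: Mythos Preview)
Your proof is correct and follows essentially the same route as the paper's: the same time split $h=\sigma+\tfrac{1}{r(1)}$, the same Poisson regions, and the same median/mean bound $\PP(\mathrm{Poi}(\lambda)\ge k)\ge\tfrac12$ for $\lambda\ge k$. The only cosmetic differences are that you split the favorable event into five pieces $E_1,\ldots,E_5$ where the paper bundles them into three ($G_i,G_j,F$), and you spell out explicitly why $\Theta$-points with $u>r(1)$ on $[\sigma,h]$ cannot dislodge the tagged particles from the emptied sites $i,j$---a point the paper handles in a single parenthetical remark about $\Delta(0)=r(1)$.
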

\begin{proof}Write $h=t+s$ with $t=3\frac{x_i\vee x_j}{r(1)}$ and $s=\frac{1}{r(1)}$, and note that $\{\tau\le h\}\supseteq G_i\cap G_j\cap F$, where
\begin{eqnarray*}
G_i & := & \left\{\Xi\left([0,t+s]\times[0,1]\times[n]\times\{i\}\right)=0\right\}\bigcap \left\{\Xi\left([0,t]\times[0,r(1)]\times \{i\}\times [n]\setminus\{i,j\}\right)\ge x_i\right\}\\
G_j & := & \left\{\Xi\left([0,t+s]\times[0,1]\times[n]\times\{j\}\right)=0\right\}\bigcap \left\{\Xi\left([0,t]\times[0,r(1)]\times \{j\}\times [n]\setminus\{i,j\}\right)\ge x_j\right\}\\
F & := & \left\{\Theta\left([0,t]\times[0,1]\times[n]\right)=0\right\}\bigcap  \left\{\Theta\left([t,t+s]\times[0,r(1)]\times[n]\right)\ge 1\right\}.
\end{eqnarray*}
Indeed, the events $G_i,G_j$ guarantee that $X_i,X_j$ are zero over the time-interval $[t,t+s]$, while
$F$ ensures that the tagged particles will remain in positions $i,j$ until time $t$, and then make an attempt to jump over $[t,t+s]$. The first such attempt will be successful for both particles, because the conditions in (\ref{jumpI}),(\ref{jumpJ}) are met (note that $\Delta(0)=r(1)$). Now,   $F,G_i,G_j$ are independent, with 
\begin{eqnarray*}
\PP\left(G_k\right) = e^{-t-s}\cP\left(\frac{(n-2)r(1)t}n; x_k\right) 
& \textrm{ and } &
\PP\left(F\right) = e^{-t}\cP\left(r(1)s;1\right),
\end{eqnarray*}
where we recall that $\cP\left(\lambda;k\right)$ is the probability that a Poisson variable with mean $\lambda$ is at least $k$.
The claim now easily follows from the classical estimate $\cP\left(\lambda;k\right)> \frac 12$, valid for any $\lambda\ge k\ge 0$. 
\end{proof}
\begin{corollary}
There is a dimension-free constant $\beta<\infty$  such that $\PP\left(\tau \le \beta \|x\|_\infty\right) \ge (1+\|x\|_\infty)^{-\beta}$.
\end{corollary}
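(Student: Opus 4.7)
The plan is to chain Lemmas \ref{lm:cooc} and \ref{lm:coal} by restarting at a deterministic time. First I would invoke Lemma \ref{lm:cooc} at time $t_1:=\kappa_1\|x\|_\infty$ to obtain an event $E$ of probability at least $\tfrac12$ on which each of the two tagged particles sits at a site carrying at most $a:=\kappa_2\ln(1+\|x\|_\infty)$ background particles.

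Next I would restart the joint dynamics at the deterministic time $t_1$. The triple $(\bX,\bI,\bJ)$ is driven by the two independent Poisson processes $\Xi$ and $\Theta$ of the graphical construction, and is therefore itself Markov: conditionally on $(X(t_1),I(t_1),J(t_1))$, the time-shifted triple has the same law as the original one started from that data. On the event $E$, Lemma \ref{lm:coal} applied to this restart, with ``initial'' occupancies $X_{I(t_1)}(t_1)$ and $X_{J(t_1)}(t_1)$ both bounded by $a$, yields coalescence within a further time $h:=(3a+1)/r(1)$ with conditional probability at least $e^{-3h}/8$.

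Multiplying the two probabilities gives $\PP(\tau\le t_1+h)\ge e^{-3h}/16$. Since $h=O(\ln(1+\|x\|_\infty))$ while $t_1$ is linear in $\|x\|_\infty$, the bound $t_1+h\le\beta\|x\|_\infty$ will hold for all sufficiently large dimension-free $\beta$ (with the mildly degenerate case of small $\|x\|_\infty$ absorbed into the constant), and likewise $e^{-3h}/16\ge (1+\|x\|_\infty)^{-\beta}$ after further enlarging $\beta$. Taking the larger of the two constants then delivers the claim.

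The only real subtlety I anticipate is ensuring that the Markov restart at time $t_1$ is legitimate: the tagged particles' rates depend on the time-evolving background $\bX$, so one cannot simply quote the Markov property of $\bX$. This is however precisely what the graphical construction buys us, since the Poisson processes $\Xi,\Theta$ restricted to $[t_1,\infty)$ are independent of the history up to time $t_1$ and have the same joint law as on $[0,\infty)$; so the enlarged process $(\bX,\bI,\bJ)$ inherits the strong Markov property and Lemma \ref{lm:coal} may indeed be invoked with the random new initial data.
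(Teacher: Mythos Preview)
Your proposal is correct and follows essentially the same route as the paper: apply Lemma~\ref{lm:cooc} at time $t_1=\kappa_1\|x\|_\infty$, restart via the Markov property of the enlarged process $(\bX,\bI,\bJ)$, and then invoke Lemma~\ref{lm:coal} with the occupancies bounded by $a=\kappa_2\ln(1+\|x\|_\infty)$ to get $\PP(\tau\le t_1+h)\ge e^{-3h}/16$. Your discussion of why the restart is legitimate is more explicit than the paper's, which simply cites ``the Markov property''; otherwise the arguments are identical, including the hand-wave over the degenerate case of very small $\|x\|_\infty$.
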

\begin{proof}Set $t=\kappa_1\|x\|_\infty$, $a=\kappa_2\ln(1+\|x\|_\infty)$, $h=(3a+1)/r(1)$ with $\kappa_1,\kappa_2$ as in Lemma  \ref{lm:cooc}. Then,
\begin{eqnarray*}
\PP\left(\tau \le t+h\right) & \ge &\PP\left(X_{I(t)}(t)\vee X_{J(t)}(t)\le a\right)\PP\left(\left.\tau \le t+h\right |X_{I(t)}(t)\vee X_{J(t)}(t)\le a\right).
\end{eqnarray*}
The first term is at least $\frac{1}{2}$ and the second at least $\frac{1}{8}e^{-3h}$, by Lemma \ref{lm:coal} and the Markov property. 
\end{proof}
\begin{proof}[Proof of Proposition \ref{pr:coupling}]
Let $\beta$ be as in the above corollary, and let $t,a\ge 0$ be parameters to be adjusted later. Consider the increasing sequence of events $(A_k)_{k\ge 0}$ defined by 
\begin{eqnarray}
A_k & := & \left\{\tau>t+ka\beta \right\}\cap\bigcap_{\ell=0}^{k-1}\left\{\left\|X(t+\ell a \beta)\right\|_\infty\le a\right\}.
\end{eqnarray}
By the above corollary and the Markov property, we have
$
\PP\left(A_{k+1}|A_k\right)  \le   1-\left({1+a}\right)^{-\beta}.
$
Thus, 
\begin{eqnarray}
\PP\left(A_k\right) & \le & \left(1-\left({1+a}\right)^{-\beta}\right)^k \ \le \ e^{-k\left({1+a}\right)^{-\beta}}.
\end{eqnarray}
On the other hand, it is clear from the definition of $A_k$ that
\begin{eqnarray}
\PP\left(\tau>t+k a \beta \right) & \le & \PP(A_k)+k\sup_{s\ge t}\PP\left(\left\|X(s)\right\|_\infty>a\right).
\end{eqnarray}
Recalling (\ref{XXX}), we conclude that for $t=\frac{1}{\delta}\|x\|_\infty$, 
\begin{eqnarray}
\PP\left(\tau>t+ka\beta \right) & \le & e^{-k\left({1+a}\right)^{-\beta}}+4kne^{-\theta a}.
\end{eqnarray}
Choosing $a=\frac{4}{\theta}\ln n$ and $k=\lfloor (\ln n)^{2+\beta}\rfloor$ ensures that the right-hand side is $O(\frac{1}{n^2})$, as desired. 
\end{proof}

\section{Dissolution of the solid phase}
\label{sec:solid}
\subsection{Identification of the hydrodynamic limit}

With the setting of Theorem \ref{th:solid} in mind, we fix a sequence of numbers $u_1\ge u_2\ldots\ge 0$ such that
\begin{eqnarray}
\label{assume:summable}
\sum_{k=1}^\infty u_k & \le &  \rho.
\end{eqnarray}
\begin{proposition}[Resolution]\label{pr:resol}There is a unique measurable functions $f\colon\R_+\to\R_+$ satisfying 
\begin{eqnarray}
\label{recall:hydro}
f(t) & = & \int_0^t \left\{1-\Psi^{-1}\left(\rho-\sum_{k=1}^\infty \left[u_k-f(s)\right]_+\right)\right\}\dd s,
\end{eqnarray}   
for all $t\ge 0$. Moreover, $f$ is an increasing bijection from $\R_+$ to $\R_+$ and for each $i\ge 1$,
\begin{eqnarray}
\label{eq:dissolution}
f^{-1}(u_i) & = & \sum_{k=i}^\infty\frac{1}{k}\int_{\rho_k}^{\rho_{k-1}}\frac{{\rm d}s}{1-\Psi^{-1}(s)},
\end{eqnarray}
where the numbers $\rho_0\ge\rho_1\ge\ldots\ge 0$ are given by $\rho_k := \rho+ku_{k+1}-\sum_{i=1}^ku_i.$
\end{proposition}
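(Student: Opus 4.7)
The plan is to pass to the inverse function. Introduce
\begin{equation*}
\phi(y) \,:=\, \rho - \sum_{k \ge 1}[u_k - y]_+, \qquad G(y) \,:=\, 1 - \Psi^{-1}(\phi(y)),
\end{equation*}
so that (\ref{recall:hydro}) is nothing but the integrated form of the ODE $f'(t) = G(f(t))$, which after inversion becomes the purely explicit $g'(y) = 1/G(y)$ for $g := f^{-1}$. First I would record the basic properties of $G$: the map $\phi$ is continuous and nondecreasing on $\R_+$, with $\phi(0) = \rho - \sum_k u_k \ge 0$ by (\ref{assume:summable}) and $\phi(\infty) = \rho$; in fact, on the sub-interval $[u_{k+1}, u_k]$ exactly the first $k$ labels $u_j$ exceed $y$, so $\phi(y) = \rho - \sum_{j=1}^k u_j + ky$ is affine of slope $k$. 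Composing with the continuous increasing bijection $\Psi^{-1}\colon [0,\infty) \to [0,1)$ recalled in the statement of Theorem \ref{th:main} yields a continuous nonincreasing $G$ taking values in $[\,1-\Psi^{-1}(\rho),\,1-\Psi^{-1}(\phi(0))\,]\subset (0,1]$; in particular $G$ is bounded above and below by strictly positive constants.

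Given these properties, existence and the bijection property of $f$ are immediate. Set $g(y) := \int_0^y dx/G(x)$: since $1/G$ is bounded and bounded away from zero, $g$ is a bi-Lipschitz bijection $\R_+ \to \R_+$, and its inverse $f := g^{-1}$ is then also a bijection $\R_+ \to \R_+$, Lipschitz, strictly increasing, with $f(0) = 0$ and $f'(t) = G(f(t))$ a.e., which integrates to (\ref{recall:hydro}). For uniqueness, any measurable $f$ satisfying (\ref{recall:hydro}) is automatically Lipschitz (the integrand lies in $(0,1]$), hence absolutely continuous with $f' = G \circ f$ a.e. If $f_1, f_2$ are two such solutions and $h := (f_1 - f_2)^2$, the monotonicity of $G$ gives
\begin{equation*}
h'(t) \,=\, 2\bigl(f_1(t) - f_2(t)\bigr)\bigl(G(f_1(t)) - G(f_2(t))\bigr) \,\le\, 0 \qquad \text{a.e.,}
\end{equation*}
so the absolutely continuous function $h$ is nonincreasing; together with $h(0) = 0$ and $h \ge 0$ this forces $f_1 \equiv f_2$.

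Finally, the formula (\ref{eq:dissolution}) reduces to computing $f^{-1}(u_i) = g(u_i) = \int_0^{u_i} dx/G(x)$, which I would evaluate by splitting $[0,u_i]$ at the breakpoints $u_i \ge u_{i+1} \ge \cdots$; this covers $[0,u_i]$ up to a null set because $u_k \to 0$ by summability. On each sub-interval $[u_{k+1}, u_k]$ with $k \ge i$, $\phi$ is affine of slope $k$ and satisfies $\phi(u_{k+1}) = \rho_k$ and $\phi(u_k) = \rho_{k-1}$ by direct unfolding of the definition $\rho_k = \rho + k u_{k+1} - \sum_{j=1}^k u_j$, so the linear substitution $s = \phi(x)$ gives
\begin{equation*}
\int_{u_{k+1}}^{u_k}\frac{dx}{1-\Psi^{-1}(\phi(x))} \,=\, \frac{1}{k}\int_{\rho_k}^{\rho_{k-1}}\frac{ds}{1-\Psi^{-1}(s)},
\end{equation*}
and summing over $k \ge i$ yields the claim; degenerate pieces arising from ties $u_k = u_{k+1}$ contribute zero and are harmlessly matched by $\rho_k = \rho_{k-1}$. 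I do not expect any of these steps to present a real obstacle: once one reads (\ref{recall:hydro}) as an ODE whose inverse problem separates trivially, the proposition is essentially a calculus exercise, the only point requiring mild care being the organization of the piecewise-linear decomposition of $\phi$, which is handled by the observation that $u_k\to 0$.
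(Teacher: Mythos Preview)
Your proof is correct, and the organization around the inverse function $g=f^{-1}$ is cleaner than the paper's explicit piecewise construction of $f$ on the intervals $[t_{k+1},t_k)$. The two approaches to existence and to the formula (\ref{eq:dissolution}) are essentially equivalent---both exploit the piecewise-affine structure of $\phi$---but yours packages them more economically by reducing everything to the single quadrature $g(y)=\int_0^y dx/G(x)$.

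The real difference lies in the uniqueness argument. The paper proceeds by Gr\"onwall: it bounds any solution between $t(1-\Psi^{-1}(\rho))$ and $t$, uses this to truncate the infinite sum to $\kappa(t)$ terms, invokes the Lipschitz continuity of $\Psi^{-1}$ on $[0,\rho]$, and finally checks that $\kappa\in L^1(\R_+)$ via (\ref{assume:summable}). Your argument instead exploits the \emph{monotonicity} of $G$: since $G$ is nonincreasing, the squared difference $h=(f_1-f_2)^2$ has $h'\le 0$ a.e., which together with $h(0)=0$ forces $h\equiv 0$. This is genuinely simpler---it bypasses both the Lipschitz estimate on $\Psi^{-1}$ and the integrability check on $\kappa$---and would work even if $\Psi^{-1}$ were merely continuous rather than $C^1$. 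The paper's route, on the other hand, is the standard template that would still apply if $G$ happened not to be monotone; here that generality is not needed.
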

\begin{proof}[Proof of uniqueness]
Fix $t>0$. Since $\Psi^{-1}\colon \R_+\to[0,1)$ is increasing, any solution to (\ref{recall:hydro}) satisfies
\begin{eqnarray*}
t\left(1-\Psi^{-1}(\rho)\right) & \le \ f(t)\ \le & t.
\end{eqnarray*}
Setting $\kappa(t) := \max\left\{k\ge 1\colon u_k>{t}\left(1-\Psi^{-1}(\rho)\right)\right\}$ (which is finite), we deduce that 
\begin{eqnarray}
\sum_{k=1}^\infty \left[u_k-f(t)\right]_+ & = & \sum_{k=1}^{\kappa(t)} \left[u_k-f(t)\right]_+.
\end{eqnarray}
In particular, if  $g$ is another solution to (\ref{recall:hydro}), we have
\begin{eqnarray}
\left|\sum_{k=1}^\infty \left[u_k-f(t)\right]_+-\sum_{k=1}^\infty \left[u_k-g(t)\right]_+\right| & \le & {\kappa(t)} \left|f(t)-g(t)\right|.
\end{eqnarray}
Now, $\Psi^{-1}$ is continuously differentiable and hence $\alpha-$Lipschitz on $[0,\rho]$ for some $\alpha<\infty$. Therefore, 
\begin{eqnarray}
\left|\Psi^{-1}\left(\rho- \sum_{k=1}^\infty \left[u_k-f(t)\right]_+\right)-\Psi^{-1}\left(\rho- \sum_{k=1}^\infty \left[u_k-g(t)\right]_+\right)\right| & \le & \alpha {\kappa(t)} \left|f(t)-g(t)\right|.
\end{eqnarray}
Integrating this inequality and recalling (\ref{recall:hydro}), we obtain the differential inequality
\begin{eqnarray}
\left|f(t)-g(t)\right| & \le & \alpha \int_0^t \kappa(s) \left|f(s)-g(s)\right|\dd s.
\end{eqnarray}
This will force $f=g$ by Grönwall's Lemma, provided we can show that $\kappa\in L^1(\R_+)$. But
\begin{eqnarray}
\label{integrability}
\int_0^\infty \kappa(t)\,{\rm d}t & = & \frac{1}{1-\Psi^{-1}(\rho)}\sum_{k=1}^\infty u_k,
\end{eqnarray}
by Fubini's Theorem, and the right-hand side is clearly finite, concluding the proof.
\end{proof}
\begin{proof}[Explicit resolution]  Let $\Phi\colon \R_+\to\R_+$ be the function defined by
\begin{eqnarray}
\Phi(t) & := & \int_0^{t}\frac{1}{1-\Psi^{-1}(s)}\dd s.
 \end{eqnarray} 
Note that $\Phi$ increases continuously from $0$ to $+\infty$, so that $\Phi^{-1}\colon\R_+\to\R_+$ is well-defined. Now, let $\rho=\rho_0\ge\rho_1\ge\ldots\ge 0$ and $t_1\ge t_2\ge \ldots\ge 0$ be defined by 
\begin{eqnarray}
\rho_k & := & \rho+ku_{k+1}-\sum_{i=1}^ku_i\\
t_k & := & \sum_{i=k}^\infty\frac{\Phi(\rho_{i-1})-\Phi(\rho_i)}{i}.
\end{eqnarray}
Finally, define a function $f$ separately on each $[t_{k+1},t_{k}), k\ge 0$ (with the convention $t_0=+\infty$) by
\begin{eqnarray}
\forall t\in [t_{k+1},t_{k}),\qquad f(t) & := & u_{k+1}+\frac{\Phi^{-1}\left(\Phi(\rho_k)+k(t-{t_{k+1}})\right)-\rho_k}{k}.
\end{eqnarray}
Note that for any $k\ge 1$, we have $f(t_{k})=u_k$. Moreover, the left-limit of $f$ at $t_{k}$ is 
\begin{eqnarray*}
f(t_{k}-) & = & u_{k+1}+\frac{\Phi^{-1}\left(\Phi(\rho_k)+k(t_{k}-{t_{k+1}})\right)-\rho_k}{k}\\
& = & u_{k+1}+\frac{\rho_{k-1}-\rho_k}{k}\\
& = & u_k.
\end{eqnarray*}
This shows that $f$ is continuous at each $t_k,k\ge 1$. Since $f$ is clearly continuously increasing on each $[t_k,t_{k-1})$, we deduce that $f$ is continuously increasing on the whole of $(0,\infty)$. Moreover,
\begin{eqnarray*}
f(0+) & = & \lim_{k\to \infty} \downarrow f(t_k) \ = \ \lim_{k\to \infty} \downarrow u_{k}\ = \ 0,
\end{eqnarray*}
so setting $f(0):=0$ extends $f$ into a continuously increasing function on $\R_+$. The strict monotony together with the fact that $f(t_k)=u_k$ shows that for all $t\in\R_+$ and $k\ge 1$,
\begin{eqnarray}
\label{fmono}
f(t)<u_k & \Longleftrightarrow & t<t_k.
\end{eqnarray}
Finally, $f$ is continuously differentiable on each $(t_{k+1},t_{k})$ and for $t\in (t_{k+1},t_{k})$, we easily compute
\begin{eqnarray*}
f'(t)
& = & 1-\Psi^{-1}\left(\rho+kf(t)-\sum_{i=1}^ku_i\right)\\
\\ & = & 1-\Psi^{-1}\left(\rho-\sum_{i=1}^\infty\left[u_i-f(t)\right]_+\right),
\end{eqnarray*}
where the second equality follows from (\ref{fmono}). Thus, $f$ is a solution to (\ref{recall:hydro}), and (\ref{eq:dissolution}) is clear.
\end{proof}
\subsection{Proxy for the empirical distribution}
The purpose of this section is to obtain a good approximation for the empirical measure
$
\frac{1}{n}\sum_{i=1}^n\delta_{X_i(t)}
$, out of which we will then extract a good approximation for the mean-field jump rate $\zeta(t)$. For each $z\in(0,1)$, we  define a probability distribution $\p(z)=(\p(z;k))_{k\ge 0}$ on $\Z_+$ by the formula
 \begin{eqnarray}
\p(z;k) & := & \frac{1}{R(z)}\frac{z^k}{r(1)\cdots r(k)}.
\end{eqnarray}
We extend this definition to $z=0$ by setting $\p(0)=\delta_0$.
Note that the mean of $\p(z)$ is precisely $\Psi(z)$. It will be convenient to re-parameterize $\p$ in terms of its mean by setting $\q(s):=\p\left(\Psi^{-1}(s)\right)$ for $s\in(0,\infty)$. 
We start by showing that $\q(\rho)$ is the limiting empirical distribution at equilibrium. 
\begin{lemma}[Empirical distribution at equilibrium]\label{lm:profile} In the regime (\ref{assume:sparse}), we have for any fixed $\varepsilon>0$, 
\begin{eqnarray}
\limsup_{n\to\infty}\frac 1n \log \pi\left(\left\{x\in\Omega\colon \dtv\left(\frac{1}{n}\sum_{i=1}^n\delta_{x_i},\,\q\left(\rho\right)\right)\ge\varepsilon\right\}\right)  & < & 0.
\end{eqnarray}
\end{lemma}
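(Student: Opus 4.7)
The plan is to exploit the fact that $\pi$ is a product measure conditioned on its total mass: for any $z\in[0,1)$, if $Y_1,\ldots,Y_n$ are i.i.d.\ with common law $\p(z)$, then their joint distribution conditioned on $\{\sum_i Y_i = m\}$ coincides with $\pi$. Choosing the natural tilt $z_n := \Psi^{-1}(m/n)$, so that $\p(z_n)=\q(m/n)$ has mean exactly $m/n$, the quantity of interest factors as
\begin{eqnarray*}
\pi\left(\left\{\dtv\left(\tfrac{1}{n}\sum_i\delta_{x_i},\q(\rho)\right)\ge\varepsilon\right\}\right) & = & \frac{\PP\left(\dtv(\hat\mu_n,\q(\rho))\ge\varepsilon,\, \sum_i Y_i = m\right)}{\PP\left(\sum_i Y_i=m\right)},
\end{eqnarray*}
where $\hat\mu_n := \frac{1}{n}\sum_i\delta_{Y_i}$. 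I then aim to show that the numerator is exponentially small in $n$ whereas the denominator is only polynomially small, so that the ratio decays exponentially. The degenerate case $\rho=0$ is immediate: then at most $m=o(n)$ coordinates of any $x\in\Omega$ are nonzero, so $\dtv(\hat\mu_x,\q(0))\le m/n\to 0$ deterministically. Assume henceforth $\rho>0$.

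For the denominator, the $Y_i$ are i.i.d.\ integer-valued with mean $m/n$, variance of order one (because $z_n\to\Psi^{-1}(\rho)\in(0,1)$ stays strictly inside the disk of convergence of $R$), and lattice span $1$ (because $\q(m/n;k)>0$ for every $k\in\Z_+$). The classical local central limit theorem for sums of i.i.d.\ integer-valued variables then gives $\PP(\sum_i Y_i=m)\ge c/\sqrt{n}$ for some $c>0$ and all $n$ large enough, which is negligible on the exponential scale.

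For the numerator, I would first replace $\q(\rho)$ by $\q(m/n)$, using $\dtv(\q(m/n),\q(\rho))\to 0$ to reduce matters to bounding $\PP(\dtv(\hat\mu_n,\q(m/n))\ge \varepsilon/2)$. A truncated Sanov argument then handles this. Pick a threshold $K=K(\varepsilon)<\infty$ so large that the true tail mass $\q(m/n)(\{k\ge K\})$ and, with probability $1-e^{-\Omega(n)}$ by Chernoff, also the empirical tail mass $\hat\mu_n(\{k\ge K\})$ are both $\le\varepsilon/8$; this is possible thanks to the uniform exponential tails of $\q(m/n)$ inherited from $z_n$ staying strictly below $1$. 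On that high-probability event, $\dtv(\hat\mu_n,\q(m/n))\ge\varepsilon/2$ forces the truncation of $\hat\mu_n$ to $\{0,\ldots,K\}$ to be at total-variation distance $\ge \varepsilon/4$ from the truncation of $\q(m/n)$, so by Pinsker's inequality their relative entropy is at least $\varepsilon^2/8$. The method of types on this finite alphabet then yields
\begin{eqnarray*}
\PP\left(\dtv(\hat\mu_n,\q(m/n))\ge\varepsilon/2\right) & \le & (n+1)^{K+1}\exp\left(-n\varepsilon^2/8\right),
\end{eqnarray*}
which is exponentially small. Combining this with the denominator bound closes the argument.

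The main subtlety is that the reference law $\q(m/n)$ drifts with $n$, so the Sanov-type estimate must be applied with uniform control: the rate function must not degenerate, the tails must decay uniformly, and the lattice must remain aperiodic as $n\to\infty$. All three properties follow from $z_n$ staying bounded away from the radius of convergence $1$ of $R$, combined with Pinsker's inequality providing a universal quadratic lower bound on relative entropy in terms of total variation.
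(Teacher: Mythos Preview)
Your argument is correct and shares the paper's overall strategy: write $\pi$ as the law of i.i.d.\ $\p(z)$-variables conditioned on $\{S_n=m\}$, bound the numerator by a Sanov-type estimate, and show the denominator $\PP(S_n=m)$ is only sub-exponentially small. The difference lies in how the denominator is handled. The paper fixes the tilt at the limiting value $z=\Psi^{-1}(\rho)$ and exploits a model-specific feature: since $r$ is non-decreasing, $\p(z)$ is log-concave, hence so is the law of $S_n$, and unimodality gives $(m+1)\,\PP(S_n=m)\ge \PP(S_n\in[0,m])\wedge\PP(S_n\in[m,2m])$, each factor being $e^{-o(n)}$ by Cram\'er. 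Your choice $z_n=\Psi^{-1}(m/n)$ centers $S_n$ exactly at $m$, which lets you appeal to a local CLT instead; this sidesteps the log-concavity observation but, as you correctly flag, requires the LCLT uniformly over a triangular array---not the off-the-shelf statement, though it follows from $z_n$ lying in a fixed compact subset of $(0,1)$ via standard characteristic-function bounds. For the numerator the paper simply invokes ``standard large deviation estimates'', and your truncation plus method-of-types plus Pinsker argument is one valid way to make that precise; just be explicit that the ``truncation'' means the empirical law of $Y_i\wedge K$, so that you are genuinely on a finite alphabet.
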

\begin{proof}In the degenerate case $\rho=0$, the claim is trivial since any law on $\Z_+$ is at total-variation distance at most its mean from the Dirac mass $\delta_0$. We henceforth assume that $\rho>0$, and we set $z=\Psi^{-1}(\rho)\in(0,1)$. Consider a random vector $X:=(X_1,\ldots,X_n)$ whose coordinates are i.i.d. with law $\p(z)$.
Then for any $x=(x_1,\ldots,x_n)\in\Omega$, we have
\begin{eqnarray}
\PP\left(X=x\right) 
& = & \frac{z^{m}}{\left(R(z)\right)^n}\prod_{i=1}^n\prod_{k=1}^{x_i}\frac{1}{r(k)}.
\end{eqnarray}
Thus, $x\mapsto \PP\left(X=x\right)$ is proportional to $\pi$ on $\Omega$, and hence for any $A\subseteq \Omega$, we have the representation
\begin{eqnarray}
\label{conditional}
\pi(A) & = &  \frac{{\PP}\left(X\in A\right)}{\PP\left(X\in\Omega\right)}.
\end{eqnarray}
We now fix $\varepsilon>0$ and take
\begin{eqnarray}
\label{def:A}
A & := & \left\{x\in\Omega\colon \dtv\left(\frac{1}{n}\sum_{i=1}^n\delta_{x_i},\,\p\left(z\right)\right)\ge\varepsilon\right\}.
\end{eqnarray}
Since the coordinates of $X$ are i.i.d. with law $\p(z)$, standard large deviation estimates imply
\begin{eqnarray}
\label{sanov}
\limsup_{n\to\infty}\frac 1n\log\PP\left(X\in A\right) & < & 0;\\
\label{ldpu}
\lim_{n\to\infty}\frac{1}{n}\log \PP\left(S_n\in [0,m]\right) & = & 0;\\
\label{ldpl}
\lim_{n\to\infty}\frac{1}{n}\log \PP\left(S_n\in [m,2m]\right) & = & 0,
\end{eqnarray}
where $S_n=X_1+\cdots+X_n$. On the other hand, since $\p(z)$ is log-concave, and since this property is preserved under convolutions, the law of $S_n$ is log-concave and hence unimodal, so for any $a\in\N$,
\begin{eqnarray}
(a+1)\PP\left(S_n=m\right) & \ge & \PP\left(S_n\in[m-a,m]\right)\wedge \PP\left(S_n\in[m,m+a]\right).
\end{eqnarray}
Taking $a=m$ and using  (\ref{ldpu})-(\ref{ldpl}), we get $\frac{1}{n}\log \PP\left(X\in\Omega\right)\to 0$; (\ref{conditional})-(\ref{sanov}) completes the proof.  
\end{proof}
\begin{remark}[Monotony and regularity of $\rho\mapsto \q(\rho)$] \label{rk:lipschitz}The monotonicity (\ref{monotone}) shows that the stationary law $\pi$ is stochastically increasing in the number $m$ of particles. In view of Lemma \ref{lm:profile}, we deduce that  $\q(\rho)$ is stochastically increasing in $\rho$: if $\rho\le \rho'$, then there is a coupling $(Z,Z')$ of $\q(\rho),\q(\rho')$ such that $Z\le Z'$ almost-surely. Since $Z,Z'$ are integer-valued, we may then write
\begin{eqnarray}
\dtv\left(\q(\rho),\q(\rho')\right) & \le &  \EE\left[|Z'-Z|\right] \ = \ \EE[Z']-\EE[Z] \ = \ \rho'-\rho.
\end{eqnarray}
In conclusion, $\rho\mapsto \q(\rho)$ is increasing and $1-$Lipschitz. 
\end{remark}
We will show that the approximation $\frac{1}{n}\sum_{i=1}^n\delta_{X_i}\approx \q(\rho)$ remains valid out of equilibrium, provided $\rho$ is replaced by an \emph{effective density}, obtained by ignoring the particles in the solid phase. To formalize this, we assume that the solid phase is initially restricted to some fixed region $\{1,\ldots,L\}$:
\begin{eqnarray}
\label{assume:finitary}
\max_{L<i\le n}x_i & = & o(n).
\end{eqnarray}
Note that by (\ref{XXX}), this property is preserved by the dynamics in the sense that for any fixed $t\ge 0$,
\begin{eqnarray}
\label{cons:finitary}
\max_{L< i\le n}X_i(nt) & = & o(n),
\end{eqnarray}
almost-surely (as long as all processes live on the same probability space). 
\begin{proposition}[Proxy for the empirical measure]\label{pr:finitary}If $x=x(n)$ satisfies (\ref{assume:finitary}), then for fixed $t>0$,
\begin{eqnarray}
\EE_x\left[\dtv\left(\frac{1}{n}\sum_{i=1}^n\delta_{X_i(nt)},\, \q\left(\frac 1n\sum_{i=L+1}^n{X_i(nt)}\right)\right)\right] & \xrightarrow[n\to\infty]{} & 0.
\end{eqnarray}
Since the rate function $r$ has mean $z$ under the law $\p(z)$, we have in particular
\begin{eqnarray}
\label{rate}
\EE_x\left[\left|\zeta(nt)-\Psi^{-1}\left(\frac mn-\frac 1n\sum_{i=1}^L{X_i(nt)}\right)\right|\right] & \xrightarrow[n\to\infty]{} & 0.
\end{eqnarray}
\end{proposition}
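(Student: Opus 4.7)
I would prove Proposition~\ref{pr:finitary} via a separation-of-timescales argument: on the scale $nt$, the liquid phase (sites $L+1,\ldots,n$) mixes much faster than the solid phase evolves, so it sits in local equilibrium given the current liquid mass. Throughout, write $S(u):=\sum_{i\le L}X_i(u)$ and $\bar\rho_n(u):=\tfrac1n\sum_{i>L}X_i(u)=\tfrac{m}{n}-\tfrac{S(u)}{n}$.

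\textbf{Step 1 (Sub-linear look-back window).} Pick $s_n$ with $(\log n)^{\kappa}\ll s_n\ll n$, e.g.\ $s_n:=n^{1/2}$, and analyse the dynamics on $[nt-s_n,nt]$. A union bound over (\ref{XXX}) with $a=C\log n$ gives, with probability $1-o(1)$,
\[
\max_{i>L} X_i(nt-s_n) \ \le\ C\log n,
\]
since $x_i=o(n)$ for $i>L$ is annihilated by the drift $\delta(nt-s_n)$. Similarly, (\ref{tightness}) applied to each of the $L$ solid sites yields $|S(nt)-S(nt-s_n)|=O(Ls_n)$ with high probability, so $\bar\rho_n(u)$ only moves by $o(1)$ on the window.

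\textbf{Step 2 (Fast mixing of the liquid given a sub-linear initial profile).} From the snapshot $\bX(nt-s_n)$, I couple the true dynamics with an auxiliary dynamics $\widetilde\bX$ constructed from the same Poisson marks in (\ref{jump}) but with every transition touching a solid site \emph{suppressed}. Then $\widetilde\bX^{\rm liq}$ is a closed zero-range process on $n-L$ sites with conserved mass $m-S(nt-s_n)$ started from a liquid configuration of $\ell^\infty$-norm $O(\log n)$; Theorem~\ref{th:liquid} (which applies verbatim to any number of sites and respects the density bound $\overline{\rho}$) thus produces a stationary law $\pi^{\rm liq}_{m-S(nt-s_n)}$ within $o(1)$ in total variation after time $\kappa(\log n)^\kappa \ll s_n$. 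On the other hand, the cross-flux between the two phases has rate at most $L+L\|r\|_\infty\le 2L$, so the number of suppressed Poisson marks in $[nt-s_n,nt]$ is $O(s_n)$ with high probability; a defect-by-defect comparison then gives
\[
\dtv\bigl(\mathrm{Law}(\bX^{\rm liq}(nt)\mid \bX(nt-s_n)),\ \pi^{\rm liq}_{m-S(nt-s_n)}\bigr) \ =\ o(1).
\]

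\textbf{Step 3 (Identifying the profile).} Lemma~\ref{lm:profile}, applied to the reduced system of $n-L$ sites with $m-S(nt)$ particles, shows that the empirical measure under $\pi^{\rm liq}_{m-S(nt)}$ concentrates on $\q\bigl(\tfrac{m-S(nt)}{n-L}\bigr)$. Since $L$ is fixed and $|S(nt)-S(nt-s_n)|=o(n)$, Remark~\ref{rk:lipschitz} places this within $o(1)$ (in total variation) of $\q(\bar\rho_n(nt))$. The $L$ solid sites contribute at most $L/n=o(1)$ to the full empirical measure, so combining with Step~2,
\[
\dtv\!\left(\frac1n\sum_{i=1}^n\delta_{X_i(nt)},\ \q(\bar\rho_n(nt))\right) \ =\ o(1)
\]
in probability, and taking expectations (the TV distance is bounded by $1$) yields the main claim. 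The derived estimate (\ref{rate}) is then immediate: $\zeta(nt)=\int r\,\mathrm{d}\bigl(\tfrac1n\sum_i\delta_{X_i(nt)}\bigr)$, the mean of $r$ under $\q(s)$ equals $\Psi^{-1}(s)$ by a direct computation from (\ref{def:psi}), and $\|r\|_\infty\le 1$ lets us pass from TV convergence of the empirical measure to $L^1$ convergence of $\zeta(nt)$.

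\textbf{Main obstacle.} Step~2 is the delicate one. The liquid phase is \emph{not} a closed zero-range process, and Theorem~\ref{th:liquid} cannot be invoked on the full dynamics because $\|\bX(nt-s_n)\|_\infty$ is dominated by the $\Theta(n)$ occupancies of the solid sites. The redeeming feature is that, $L$ being fixed and $r\le 1$, the cross-flux has $O(1)$ rate, so only $O(s_n)\ll n$ cross-jumps occur inside the refresh window; quantifying this defect-counting coupling (and controlling the accumulated TV error) is the technical heart of the proof and is where the assumption (\ref{assume:finitary}) with $L$ \emph{independent of $n$} is used most crucially.
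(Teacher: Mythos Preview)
Your separation-of-timescales strategy is sound, but the displayed conclusion of Step~2,
\[
\dtv\bigl(\mathrm{Law}(\bX^{\rm liq}(nt)\mid \bX(nt-s_n)),\ \pi^{\rm liq}_{m-S(nt-s_n)}\bigr)=o(1),
\]
is false as written. The total liquid mass at time $nt$ is $m-S(nt)$, which with probability $1-o(1)$ differs from $m-S(nt-s_n)$ (there are in expectation $\Theta(s_n)\to\infty$ cross-events on the window), so the two measures are supported on disjoint particle-number sectors and their total-variation distance is $1-o(1)$, not $o(1)$. What your defect-counting coupling \emph{does} deliver is the right statement at the level of empirical measures: driving $\bX$ and $\widetilde\bX$ by the same $\Xi$, the discrepancy $D(u):=\sum_{i>L}|X_i(u)-\widetilde X_i(u)|$ is non-increasing under liquid$\to$liquid marks (by monotonicity of $r$) and increases by at most $1$ at each cross mark, whence $\EE[D(nt)]\le 2Ls_n=o(n)$ and therefore the total-variation distance between the two empirical measures is $o(1)$ in expectation. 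That is exactly what Step~3 consumes, so with this correction the argument goes through.

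The paper takes a somewhat different route that avoids the look-back window. Rather than freezing the solid phase over a short interval, it zeroes out the first $L$ coordinates of $x$ at time $0$ and couples the resulting process $\widehat\bX$ monotonically with $\bX$ via the common graphical construction (Lemma~\ref{lm:truncation}), obtaining $\EE\bigl[\dtv(\tfrac1n\sum_i\delta_{X_i(t)},\tfrac1n\sum_i\delta_{\widehat X_i(t)})\bigr]\le L(1+t)/n$. Applying the already-settled $L=0$ case to $\widehat\bX$ and Remark~\ref{rk:lipschitz} yields a bound of $3Lt$ on the $\limsup$ at macroscopic time $t$, which does not vanish. The finishing trick is a Markov bootstrap: since~(\ref{assume:finitary}) propagates by~(\ref{cons:finitary}), one may restart from $X(n(t-\varepsilon))$ and apply the same bound on $[n(t-\varepsilon),nt]$ to get $3L\varepsilon$ instead, then let $\varepsilon\downarrow0$. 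Your sub-linear window $s_n$ plays exactly the role of this $\varepsilon n$, but the paper's global monotone truncation compares two genuine zero-range processes (with different total masses) and thereby sidesteps the mass-sector mismatch that tripped up your Step~2.
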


Our proof will consist in comparing the system with one where the solid phase is removed, so that Theorem \ref{th:liquid} becomes applicable. We will rely on the following  lemma. 
\begin{lemma}[Truncation]\label{lm:truncation}Fix $x\in\Z_+^n$, and let $\widehat{x}$ be obtained by zeroing the first $L$ coordinates. Then, the processes $\bf X,\widehat{X}$ obtained by applying the graphical construction  to $x,\widehat{x}$ satisfy
\begin{eqnarray*}
\EE\left[\dtv\left(\frac{1}{n}\sum_{i=1}^n\delta_{X_i(t)},\, \frac{1}{n}\sum_{i=1}^n\delta_{\widehat{X}_i(t)}\right) \right] & \le & \frac{L(1+t)}n.
\end{eqnarray*}

\end{lemma}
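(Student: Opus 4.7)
The plan is to use the monotone coupling from the graphical construction to reduce the total-variation distance to a site-wise disagreement count, and then to control the expected number of disagreement sites by a growth-rate analysis.

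\textbf{Step 1 (Reduction to counting disagreements).} By the monotonicity of $r$, applying the graphical construction to $x$ and $\widehat{x}$ yields $\widehat{X}_i(t) \le X_i(t)$ a.s. for every $i$ and $t\ge 0$ (cf. \eqref{monotone}). Setting $D_i(t) := X_i(t) - \widehat{X}_i(t) \ge 0$ and using the site-wise product coupling of the two empirical measures, one obtains the standard bound
\[
\dtv\!\left(\frac{1}{n}\sum_i \delta_{X_i(t)},\, \frac{1}{n}\sum_i \delta_{\widehat{X}_i(t)}\right) \le \frac{N(t)}{n}, \qquad N(t) := \bigl|\{i\in[n] : D_i(t) > 0\}\bigr|.
\]
Since $\widehat{x}$ and $x$ differ on at most the first $L$ coordinates, $N(0) \le L$, and it suffices to prove $\EE[N(t)] \le L(1+t)$.

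\textbf{Step 2 (Dynamics of $N$).} Under the graphical construction, $D$ is preserved by every Poisson point $(s,u,i,j) \in \Xi$ with $u \le r(\widehat{X}_i(s-))$ (both chains jump), and is modified only by the ``case 2'' points satisfying $r(\widehat{X}_i(s-)) < u \le r(X_i(s-))$: such an event transfers one unit of excess from $D_i$ to $D_j$. A brief case check on the pre-jump pair $(D_i(s-), D_j(s-))$ shows that $N$ can strictly increase---and then by exactly one---only when simultaneously $D_i(s-) \ge 2$ (so site $i$ stays in the support of $D$) and $D_j(s-) = 0$ (so site $j$ joins it); in every other outcome, $N$ is unchanged or decreases. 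Summing the corresponding rates and using $r(X_i) - r(\widehat{X}_i) \le 1$, the expected rate of $N$-increase at time $s$ is at most $\EE[N_{\ge 2}(s)]$, where $N_{\ge 2}(s) := |\{i : D_i(s) \ge 2\}|$.

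\textbf{Step 3 (Closing the estimate).} What remains is to establish $\int_0^t \EE[N_{\ge 2}(s)]\,{\rm d}s \le Lt$; the cleanest route is to show that $\EE[N_{\ge 2}(s)] \le L$ uniformly in $s$. Heuristically, case 2 events conserve the total excess $\sum_k D_k(s)$ and typically transport it from a heavy site to a previously empty site---turning the latter into a \emph{singleton} of $D$, not a new heavy site. A new heavy site can only arise when a case 2 jump lands at an existing singleton, whose rate is $O\bigl(\Lambda(s)\cdot|\{j : D_j(s) = 1\}|/n\bigr)$ with $\Lambda(s) := \sum_i(r(X_i(s)) - r(\widehat{X}_i(s)))$, and thus remains small. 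I expect this to be formalized via a direct generator computation on the functional $\sum_i \1_{D_i \ge 2}$, yielding a supermartingale-type bound $\EE[N_{\ge 2}(s)] \le N_{\ge 2}(0) \le L$. This is the main non-routine ingredient; granted it, the integration
\[
\EE[N(t)] \le N(0) + \int_0^t \EE[N_{\ge 2}(s)]\,{\rm d}s \le L + Lt = L(1+t),
\]
combined with Step 1, completes the proof.
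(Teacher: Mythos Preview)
Your Steps 1 and 2 are correct, but Step 3 contains a genuine gap: the claim $\EE[N_{\ge 2}(s)] \le L$ (and even the weaker time-averaged version $\int_0^t \EE[N_{\ge 2}(s)]\,{\rm d}s \le Lt$) is false in general, so the proposed supermartingale argument cannot work. A clean counterexample is the rate-one case with $L=1$, $x_1 = n$, and $x_i = 0$ for $i>1$. Then $\widehat{x}=0$, so $\widehat{X}(t)\equiv 0$ and $D_i(t)=X_i(t)$ for all $i$. By the hydrodynamic picture (Theorem~\ref{th:solid}), at time $s = 0.9n$ the liquid phase already has a positive density, and its empirical law is approximately geometric; a fixed positive fraction of sites carry at least two particles. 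Hence $\EE[N_{\ge 2}(0.9n)]$ is of order $n$, far larger than $L = 1$, while the lemma's bound $L(1+t)/n \approx 0.9$ is still non-trivial at this time. Your own heuristic identifies the mechanism: new heavy sites are created when a case-2 jump lands on a singleton, and once singletons proliferate this rate is not small. The generator of $N_{\ge 2}$ is therefore not non-positive, and no supermartingale bound is available.

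The paper avoids this obstruction by tracking a different quantity. Rather than controlling $N(t)$ through its own increments, it bounds the contribution of sites $i>L$ by the $\ell^1$ excess there:
\[
\sum_{i>L}\mathbf{1}_{D_i(t)>0}\ \le\ \sum_{i>L} D_i(t)\ =\ \sum_{i>L} X_i(t)-\sum_{i>L}\widehat{X}_i(t).
\]
This sum starts at $0$ and can only increase (by one unit) at a Poisson point of $\Xi$ whose \emph{source} lies in $[L]$; its expectation is therefore at most $\EE\bigl[\Xi([0,t]\times[0,1]\times[L]\times[n])\bigr]=Lt$. Adding the trivial bound $L$ for the first $L$ sites gives $\EE[N(t)]\le L+Lt$, with no need to understand $N_{\ge 2}$ at all. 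This is the missing idea; once you substitute it for your Step~3, the argument is complete in two lines.
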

\begin{proof}
By (\ref{monotone}), we have $\widehat{X}(t)  \le  X(t)$ for all $t\ge 0$. In particular, 
\begin{eqnarray*}
\sum_{i=L+1}^n\left|X_i(t)-\widehat{X}_i(t)\right| & = & \sum_{i=L+1}^nX_i(t)-\sum_{i=L+1}^n\widehat{X}_i(t).
\end{eqnarray*}
Now, observe that the right-hand side equals zero when $t=0$, and that the only jumps of the form (\ref{jump}) that may increment it (by $1$ unit each time) are those whose source $i$ is in $[L]$. Consequently,
 \begin{eqnarray}
 \label{lips}
\EE\left[\sum_{i=L+1}^n\left|\widehat{X}_i(t)-X_i(t)\right|\right] & \le & \EE\left[\Xi\left(\left[0,t\right]\times[0,1]\times [L]\times [n]\right)\right] \ = \ Lt.
\end{eqnarray}
On the other hand, by definition of the total-variation distance, we have
\begin{eqnarray*}
\dtv\left(\frac{1}{n}\sum_{i=1}^n\delta_{X_i(t)},\, \frac{1}{n}\sum_{i=1}^n\delta_{\widehat{X}_i(t)}\right) 
&  \le &  \frac{1}{n}\sum_{i=1}^n{\bf 1}_{\left(X_i(t)\ne\widehat{X}_i(t)\right)}.
\end{eqnarray*}
To conclude, we simply bound ${\bf 1}_{\left(X_i(t)\ne\widehat{X}_i(t)\right)}$ by $1$ for $i\le L$, and by $|X_i(t)-\widehat{X}_i(t)|$ for $i>L$. 
\end{proof}

\begin{proof}[Proof of proposition \ref{pr:finitary}]
If $L=0$, then $\|x\|=o(n)$, so Theorem \ref{th:liquid} ensures that for fixed $t>0$,
\begin{eqnarray}
\max_{A\subseteq \Omega}\left|\PP_x\left(X(nt)\in A\right) - \pi(A)\right|  & \xrightarrow[n\to\infty]{} & 0
\end{eqnarray}
Since the event $A$ in Lemma \ref{lm:profile} satisfies  $\pi(A)\to 0$, we must have $\PP_x\left(X(nt)\in A\right)\to 0$. Thus,
\begin{eqnarray*}
\EE_x\left[\dtv\left(\frac{1}{n}\sum_{i=1}^n\delta_{X_i(nt)},\,\q\left(\rho\right)\right)\right] & \xrightarrow[n\to\infty]{} & 0.
\end{eqnarray*}
On the other hand, we have $\dtv\left(\q\left(\frac m n\right),\q(\rho)\right)\to 0$, so the case $L=0$ is proved. 
Now, assume that $x$ satisfies (\ref{assume:finitary}) for some $L\ge 1$, and let $\widehat{x}$ be as  in Lemma \ref{lm:truncation}. Then  $\|\widehat{x}\|_\infty=o(n)$ by construction, so the case $L=0$ with $\widehat{m}:=m-(x_1+\cdots+x_L)$ particles instead of $m$ implies
\begin{eqnarray*}
\EE\left[\dtv\left(\frac{1}{n}\sum_{i=1}^n\delta_{{\widehat{X}}_i(nt)},\,\q\left(\frac{\widehat{m}}{n}\right)\right)\right] & \xrightarrow[n\to\infty]{} & 0.
\end{eqnarray*}
On the other hand, under the coupling of Lemma \ref{lm:truncation}, we have 
\begin{eqnarray*}
\EE\left[\dtv\left(\frac{1}{n}\sum_{i=1}^n\delta_{\widehat{X}_i(nt)},\,\frac{1}{n}\sum_{i=1}^n\delta_{{X}_i(nt)}\right)\right] & \le & L\left(t+\frac{1}{n}\right).
\end{eqnarray*}
Finally, Remark \ref{rk:lipschitz} implies that
\begin{eqnarray*}
\dtv\left(\q\left(\frac{{\widehat{m}}}{n}\right),\,\q\left(\sum_{i=L+1}^n\frac{X_i(nt)}{n}\right)\right) & \le & \frac{1}{n}\left|\sum_{i=1}^L{X_i(nt)}-\sum_{i=1}^L{x_i}\right|,
\end{eqnarray*}
and the right-hand side has mean at most $2Lt$ by (\ref{tightness}). 
By the triangle inequality, we conclude that
\begin{eqnarray*}
\limsup_{n\to\infty} \EE_x\left[\dtv\left(\frac{1}{n}\sum_{i=1}^n\delta_{X_i(nt)},\,\q\left(\frac 1n\sum_{i=L+1}^n{X_i(nt)}\right)\right)\right] & \le & 3Lt.
\end{eqnarray*}
This may seem rather weak compared to what we want to establish. However, by (\ref{cons:finitary}), we may apply this result with $x$ replaced by  $X(ns)$ and then invoke the Markov property to obtain
 \begin{eqnarray*}
\limsup_{n\to\infty} \EE_x\left[\dtv\left(\frac{1}{n}\sum_{i=1}^n\delta_{X_i(ns+nt)},\,\q\left(\frac 1n\sum_{i=L+1}^n{X_i(ns+nt)}\right)\right)\right] & \le & 3Lt,
\end{eqnarray*}
for any $s \ge 0$ and $t>0$. Replacing $t$ with $\varepsilon$ and $s$ with $t-\varepsilon$, we see that for any $0< \varepsilon\le t$,
 \begin{eqnarray*}
\limsup_{n\to\infty} \EE_x\left[\dtv\left(\frac{1}{n}\sum_{i=1}^n\delta_{X_i(nt)}-\q\left(\frac 1n\sum_{i=L+1}^n{X_i(nt)}\right)\right)\right] & \le & 3L\varepsilon.
\end{eqnarray*}
Since $\varepsilon$ can be made arbitrarily small, the result follows.
\end{proof}

\subsection{Tightness and convergence}
We are now ready to prove Theorem \ref{th:solid}, using the classical tightness-uniqueness strategy. Define
\begin{eqnarray}
U_i^n(t) \ := \ \frac{X_i(nt)}{n}  & \textrm{ and } & V^n(t) \ := \ \int_0^t\left(1-\zeta(ns)\right)\dd s.
\end{eqnarray}
The fact that $U_i^n(t)\in[0,\overline{\rho}]$ and the domination (\ref{tightness}) suffice to guarantee the tightness of $\left(U^n_i\right)_{n\ge 1}$ in the Skorokhod space $D(\R_+,\R)$,  and the continuity of any weak sub-sequential limit $U^\star_i$. The same conclusion applies to $(V^n)_{n\ge 1}$, because $\zeta$ is $[0,1]-$valued. Our objective is to show that necessarily,
\begin{eqnarray}
\label{goalsko}
U_i^\star & = & \left[u_i-f\right]_+.
\end{eqnarray}
By diagonal extraction, we may find a sub-sequence along which we have the joint convergence
\begin{eqnarray}
\label{extract}
\left(V^n,U_1^n,U_2^n,\ldots\right) & \xrightarrow[]{} & \left(V^\star,U_1^\star,U_2^\star,\ldots\right),
\end{eqnarray}
with respect to the product topology. 
By Skorokhod's Theorem, we may even assume for convenience that the convergence (\ref{extract}) is almost-sure. Our plan is to pass to the limit in the martingale
\begin{eqnarray}
\label{Mi}
M^n_i(t) & := & U_i^n(t) - U_i^n(0)+V^n(t)-\int_0^t \left(1-r\left(nU_i^{n}(s)\right)\right)\dd s,
\end{eqnarray}
which is just a rescaled version of (\ref{dynkin:single}). Since $U_i^n$ has jumps of size at most $\frac 1n$ occuring at rate at most $2n$, a classical exponential concentration estimate for martingales (see, e.g., \cite{MR838963}) ensures that
\begin{eqnarray}
M_i^n(t) & \xrightarrow[n\to\infty]{} & 0,
 \end{eqnarray} 
almost-surely. On the other hand, (\ref{XXX}) easily imply that for fixed $t,h\ge 0$, 
\begin{eqnarray}
\label{drift}
\limsup_{n\to\infty}\max_{i\in[n]}\left\{U_i^n(t+h)-\left[{U_i^n(t)}-\delta h\right]_+\right\} & \le & 0,
\end{eqnarray}
which shows in particular that $U_i^\star(t+h)\le \left[U_i^\star(t)-\delta h\right]_+$. Thus, $U_i^\star$ is non-increasing.  Consequently, on the event $\{U_i^\star(t)>0\}$, we have $U_i^\star(s)>0$ for all $s\le t$, and hence $r(nU_i^n(s))\to 1$, by our assumption (\ref{assume:rates}). Passing to the limit in (\ref{Mi}), we conclude that the equality
\begin{eqnarray}
\label{lim}
U_i^\star(t) & = & u_i-V^\star(t),
\end{eqnarray}
holds as long as $U_i^\star(t)>0$. But both sides of (\ref{lim}) are continuous and non-increasing, so they must reach zero at the same time. Since the left-hand side is non-negative, we conclude that  for all $t\ge 0$.
\begin{eqnarray}
U_i^\star(t) & = & \left[u_i-V^\star(t)\right]_+.
\end{eqnarray}
 Comparing this with (\ref{goalsko}), we now only have to show that $V^\star$ solves (\ref{recall:hydro}), i.e.
\begin{eqnarray}
\label{newgoal}
V^\star(t) & = & \int_0^t\left\{1-\Psi^{-1}\left(\rho-\sum_{i=1}^\infty{U_i^\star(s)}\right)\right\}\dd s.
\end{eqnarray}
Fix a non-negative integer $L$. Taking $t=0$ in (\ref{drift}), we deduce that 
\begin{eqnarray}
\label{max}
\limsup_{n\to\infty}\max_{L<i\le n}{U_i^n(h)} & \le & \left[u_{L+1}-\delta h\right]_+. 
\end{eqnarray}
Choosing $h=\frac{u_{L+1}}{\delta}$ ensures that the right-hand side is zero. Consequently, we may apply (\ref{rate}) with the initial state being $X(nh)$ and use Markov's property to obtain that for any fixed $t>h$.
\begin{eqnarray}
\label{first}
\EE_x\left[\left|\zeta(nt)-\Psi^{-1}\left(\frac mn-\sum_{i=1}^L{U_i^n(t)}\right)\right|\right] & \xrightarrow[n\to\infty]{} & 0.
\end{eqnarray}
On the other hand, by continuity of $\Psi^{-1}$, we have almost-surely,
\begin{eqnarray}
\Psi^{-1}\left(\frac mn-\sum_{i=1}^L{U_i^n(t)}\right) & \xrightarrow[n\to\infty]{} & \Psi^{-1}\left(\rho-\sum_{i=1}^L{U_i^\star(t)}\right).
\end{eqnarray}
Moreover, we can safely replace $L$ by $\infty$ on the right-hand side, because (\ref{max}) ensures that $U_i^\star(t)=0$ for all  $i>L$. Combining this with (\ref{first}), we arrive at
 \begin{eqnarray}
 \label{rate*}
\EE_x\left[\left|\zeta(nt)-\Psi^{-1}\left(\rho-\sum_{i=1}^\infty{U_i^\star(t)}\right)\right|\right] & \xrightarrow[n\to\infty]{} & 0.
\end{eqnarray}
This is true for any $t>h$, but $h=\frac{u_{L+1}}{\delta}$ can be made arbitrarily small by choosing $L$ large, so (\ref{rate*}) holds for any $t>0$. Integrating over $t$, we easily deduce (\ref{newgoal}). Finally, note that the convergence
\begin{eqnarray}
\label{conclude}
{U_i^n} & \xrightarrow[n\to\infty]{} & \left[u_i-f\right]_+
\end{eqnarray}
is automatically uniform on compact subsets of $\R_+$, because the limit is continuous. It is also uniform in $i$, because (\ref{max}) ensures that $\max_{L<i\le n}{U_i^n(h)}$ can be made arbitrarily small by choosing $L$ large enough, uniformly in $n$. This concludes the proof of Theorem \ref{th:solid}.

\bibliographystyle{plain}
\bibliography{ZRP}

\end{document}